\numberwithin{equation}{section}
\newtheorem{Theorem}{Theorem}[section]
\newtheorem{Lemma}[Theorem]{Lemma}
\numberwithin{equation}{section}
\def \Vh0{\stackrel{\circ}{V}_h}
\newcommand{\lc}
{\mathrel{\raise2pt\hbox{${\mathop<\limits_{\raise1pt\hbox
{\mbox{$\sim$}}}}$}}}
\newcommand{\gc}
{\mathrel{\raise2pt\hbox{${\mathop>\limits_{\raise1pt\hbox{\mbox{$\sim$}}}}$}}}
\newcommand{\ec}
{\mathrel{\raise2pt\hbox{${\mathop=\limits_{\raise1pt\hbox{\mbox{$\sim$}}}}$}}}
\def\bb{\begin{equation}} \def\ee{\end{equation}}
\def\beqn{\begin{eqnarray}}  \def\eqn{\end{eqnarray}}
\def\beqnx{\begin{eqnarray*}} \def\eqnx{\end{eqnarray*}}
\def\bn{\begin{enumerate}} \def\en{\end{enumerate}}
\def\bd{\begin{description}} \def\ed{\end{description}}
\title[Geometric structures of transmission eigenfunctions and applications]{On local and global structures of transmission eigenfunctions and beyond}
\author{Hongyu Liu}
\address{Department of Mathematics, City University of Hong Kong, Hong Kong SAR, China}
\email{hongyu.liuip@gmail.com, hongyliu@cityu.edu.hk}
\begin{document}

\maketitle

\centerline{\footnotesize\it Dedicated to Professor Michael Klibanov on the occasion of his 70th birthday }

\begin{abstract}

The (interior) transmission eigenvalue problems are a type of non-elliptic, non-selfadjoint and nonlinear spectral problems that arise in the theory of wave scattering. They connect to the direct and inverse scattering problems in many aspects in a delicate way. The properties of the transmission eigenvalues have been extensively and intensively studied over the years, whereas the intrinsic properties of the transmission eigenfunctions are much less studied. Recently, in a series of papers, several intriguing local and global geometric structures of the transmission eigenfunctions are discovered. Moreover, those longly unveiled geometric properties produce some interesting applications of both theoretical and practical importance to direct and inverse scattering problems. This paper reviews those developments in the literature by summarizing the results obtained so far and discussing the rationales behind them. There are some side results of this paper including the general formulations of several types of transmission eigenvalue problems, some interesting observations on the connection between the transmission eigenvalue problems and several challenging inverse scattering problems, and several conjectures on the spectral properties of transmission eigenvalues and eigenfunctions, with most of them are new to the literature.

\medskip

\noindent{\bf Keywords:}~~transmission eigenvalue problem; eigenvalues and eigenfunctions; spectral properties; geometric structures; inverse problems; invisibility; surface localization 

\noindent{\bf 2010 Mathematics Subject Classification:}~~35P05; 35P10; 35P20; 35P25; 35P30; 58J50; 35R30; 81V80; 78A40; 74J20

\end{abstract}

\section{Transmission eigenvalue problems}\label{sect:1}

We start with several general formulation of the (interior) transmission eigenvalue problems. Let $\lambda\in\mathbb{C}$ signify the eigenvalue which can be complex-valued. Introduce a general linear partial differential operator (PDO) $P(x, D)$ as follows:
\begin{equation}\label{eq:pdo1}
P(x, D)=\sum_{|\alpha|\leq m} a_\alpha(x) D^\alpha,\ \ x=(x_1, x_2,\ldots, x_n)\in\mathbb{R}^n,
\end{equation}
where the set of non-negative integers, $\alpha=(\alpha_1,\alpha_2,\ldots,\alpha_n)$, is called a multi-index, $|\alpha|=\alpha_1+\alpha_2+\ldots+\alpha_n$ called the length, $m\in\mathbb{N}$ called the order of the PDO, and 
\[
D^\alpha=D_1^{\alpha_1} D_2^{\alpha_2}\cdots D_n^{\alpha_n},  
\]
with
\[
D_j=\frac{1}{\mathrm{i}}\frac{\partial}{\partial x_j},\ j=1, 2, \ldots, n\quad \mbox{and}\quad \mathrm{i}:=\sqrt{-1}.
\]
In \eqref{eq:pdo1}, $a_\alpha:\Omega\rightarrow\mathbb{C}^{N\times N}$ is called the coefficient of $P(x, D)$, where $\Omega$ is a bounded domain in $\mathbb{R}^n$. We write $P_a(x, D)$ to signify its dependence on $a:=(a_\alpha)$. In order to appeal for a more general formulation, $a$ may also depend on $\lambda$ and in such a case, we write $P_a(x, \lambda, D)$ to reflect such a dependence. Let $\mathcal{H}(\Omega)$ be a function space from $\mathbb{R}^n$ to $\mathbb{C}$ and $\mathcal{H}^N$ be its $N$-copy. ${P}_a$ acts on $\mathcal{H}^N$ and it is assumed that for $u\in \mathcal{H}^N$, ${P}_a(x, \lambda, D) u$ is well-defined in the distributional sense. We further introduce the boundary trace operator ${T_a}: \mathcal{H}^N(\Omega)\rightarrow \mathcal{B}(\partial\Omega)$ associated with $P_a$, where $\mathcal{B}(\partial\Omega)$ denotes a certain function space on $\partial\Omega$. Let $T_a^j$, $j=1, 2, \ldots L$ be a set of trace operators and $\mathcal{B}_j$ be the associated trace function spaces. It is required that $T_a^j(0)=0$. 

Let $P_a$ and $P_b$ be two linear PDOs as introduced above and $a\equiv\hspace*{-3.5mm}\backslash\ b$. The general (interior) transmission eigenvalue problem is formulated as follows:
\begin{equation}\label{eq:tep1}
\begin{cases}
P_a(x, \lambda, D) u+\lambda u=0\ &\hspace*{-1cm} \mbox{in}\ \ \Omega,\medskip\\
P_b(x, \lambda, D) v+\lambda v=0\ & \hspace*{-1cm} \mbox{in}\ \ \Omega,\medskip\\
T_a^j(u)=T_b^j(v),\ \ j=1,2,\ldots, L,
\end{cases}
\end{equation}
where $u, v\in\mathcal{H}^N$. It is clear that $u=v\equiv 0$ are a pair of trivial solutions to \eqref{eq:tep1}. If there exist a pair of nontrivial solutions $(u, v)$ (namely, either $u\equiv\hspace*{-3.8mm}\backslash\ 0$ or $v\equiv\hspace*{-3.8mm}\backslash\ 0$), then $\lambda$ is called a transmission eigenvalue and $u, v$ are the associated transmission eigenfunctions. Throughout the rest of the paper, we set $k^2=\lambda$, namely $k=\sqrt{\lambda}$. We choose the complex branch such that $\Im k\geq 0$. 

We next present two specific transmission eigenvalue problems that have been widely studied in the literature. Let 
\begin{equation}\label{eq:helm1}
P_a=\Delta+k^2 V,\quad P_b=\Delta+k^2,
\end{equation}
where $V\in L^\infty(\Omega)$. In such a case, the transmission eigenvalue problem is
\begin{equation}\label{eq:th1}
\begin{cases}
\big(\Delta+k^2(1+V)\big) u=0\ &\ \mbox{in}\ \ \Omega,\medskip\\
(\Delta+k^2) v=0\ &\ \mbox{in}\ \ \Omega,\medskip\\
u=v,\ \ \partial_\nu u=\partial_\nu v\ &\ \mbox{on}\ \partial\Omega,
\end{cases}
\end{equation}
where $\nu\in\mathbb{S}^{n-1}$ signifies the exterior unit normal to $\partial\Omega$. The usual function space for the transmission eigenvalue problem \eqref{eq:th1} is the Sobolev space $H^1(\Omega)$ and in such a case, one may require that $\partial\Omega$ is Lipschitz. There is an alternative formulation of \eqref{eq:th1}. Set $w=u-v$. Consider the eigenvalue problem for $w\in H_0^2(\Omega)$:
\begin{equation}\label{eq:th2}
\big(\Delta+k^2(1+V)\big) (\Delta+k^2)w=0\quad\mbox{in}\ \ \Omega,
\end{equation}
which is equivalent to 
\begin{equation}\label{eq:th3}
\big(\lambda^2(1+V)+\lambda\cdot(2+V)\Delta+\Delta^2 \big) w=0\quad \mbox{in}\ \ H_0^2(\Omega),\quad \lambda=k^2. 
\end{equation}
It is easy to show that if $u, v\in H^2(\Omega)$, one deduces \eqref{eq:th3} from \eqref{eq:th2}. However, in general, one cannot deduce \eqref{eq:th2} from \eqref{eq:th3}. The reduced formulation \eqref{eq:th3} avoids the regularity requirement on $\partial\Omega$, but loses the track of the transmission eigenfunctions. It is (partially) evident from \eqref{eq:th3} that the transmission eigenvalue problem is non-elliptic, non-selfadjoint and nonlinear (in terms of the eigenvalue $\lambda$). This is in sharp difference to the traditional second order PDO eigenvalue problems, say e.g. the classical Dirichlet Laplacian eigenvalue problem
\begin{equation}\label{eq:dl1}
-\Delta u=\lambda u \quad \mbox{in}\ H_0^1(\Omega), 
\end{equation}
which is elliptic, selfadjoint and linear (in $\lambda$). It is pointed out that due to the non-selfadjointness of the transmission eigenvalue problem, there might exist complex transmission eigenvalues. Hence, even for the simplest transmission eigenvalue problem \eqref{eq:th1}, the corresponding study is interesting and challenging. The transmission eigenvalue problem \eqref{eq:th1} arises in the study of the direct and inverse acoustic scattering problems. We postpone the historical background introduction of transmission eigenvalue problems to the next section. 

Let  $\varepsilon=(\varepsilon_{ij})_{i,j=1}^3$ and $\mu= (\mu_{ij})_{i,j=1}^3$ be bounded and symmetric-positive-definite-matrix-valued functions in $\Omega$. Set
\begin{equation}\label{eq:max1}
P_{\varepsilon,\mu}=\begin{bmatrix}
k(\varepsilon-I) & \frac{1}{\mathrm{i}}\mathrm{curl}\\
-\frac{1}{\mathrm{i}}\mathrm{curl} & k(\mu-I)
\end{bmatrix},
\end{equation}
where for a $\mathbb{C}^3$-valued function $\mathbf{E}$, $\mathrm{curl}\,\mathbf{E}=\nabla\wedge\mathbf{E}$. Let $\mathbf{E}_1$ and $\mathbf{H}_1$ be $\mathbb{C}^3$-valued functions and $\mathbf{u}=[\mathbf{E}_1, \mathbf{H}_1]^T$ be a $\mathbf{C}^6$-valued function. Similarly, we let $\mathbf{v}=[\mathbf{E}_2, \mathbf{H}_2]^T$ be a $\mathbb{C}^6$-valued function. Let $(\varepsilon_j, \mu_j)$, $j=1, 2$, be two sets of parameters and consider the transmission eigenvalue problem
\begin{equation}\label{eq:max2}
\begin{cases}
P_{\varepsilon_1,\mu_1} \mathbf{u}+k \mathbf{u}=0& \mbox{in}\ \Omega,\medskip\\
P_{\varepsilon_2,\mu_2} \mathbf{v}+k \mathbf{v}=0 &\mbox{in}\ \Omega,\medskip\\
\nu\wedge\mathbf{E}_1=\nu\wedge\mathbf{E}_2,\ \nu\wedge\mathbf{H}_1=\nu\wedge\mathbf{H}_2 & \mbox{on}\ \partial\Omega. 
\end{cases}
\end{equation}
The function space for the transmission eigenvalue problem \eqref{eq:max2} is usually given by $H(\mathrm{curl}; \Omega)^2$, where
\[
H(\mathrm{curl}; \Omega):=\{\mathbf{E}\in L^2(\Omega)^3; \nabla\wedge\mathbf{E}\in L^2(\Omega)^3 \}. 
\]
\eqref{eq:max2} describes the transmission eigenvalue problem associated with the Maxwell system that arise in the study of the direct and inverse electromagnetic scattering problems. It is noted that the case $\varepsilon_2=\mu_2=I$ has been extensively studied in the literature.

We would like to single out a special case with the so-called strongly elliptic PDO, which is a second order PDO of the form
\begin{equation}\label{eq:ep1}
\mathcal{P}_A u:=-\sum_{j=1}^n\sum_{l=1}^n \partial_j(A_{jk}\partial_l u)+\sum_{j=1}^n A_j\partial_j u +Au\quad\mbox{in}\ \Omega,
\end{equation}
where the coefficients 
\[
A_{jl}=(a_{pq}^{jl})_{p, q=1}^N, \quad A_j=(a_{pq}^j)_{p, q=1}^N,\quad A=(a_{pq})_{p,q=1}^N. 
\]
Generically, $\mathcal{P}_A$ in \eqref{eq:ep1} can be reduced to $P_a$ in \eqref{eq:pdo1}, say e.g. by assuming the differentiability of the coefficient matrices $A_{jl}$. Nevertheless, due to its physical significance, we point it out separately. We refer to \cite{McL} for sufficient conditions on the coefficient matrices $A_{jl}$ such that $\mathcal{P}_A$ is strongly elliptic. Replacing $P_a$ and $P_b$ by $\mathcal{P}_{A}$ and $\mathcal{P}_B$ with $A\equiv\hspace*{-3.5mm}\backslash\, B$, one can have the corresponding transmission eigenvalue problem. In such a case, the boundary traces are usually given by the Cauchy data including the Dirichlet and Neumann data as follows:
\begin{equation}\label{eq:bt1}
T_A^1 (u)=u\quad \mbox{and}\quad T_A^2(u)=\sum_{j,l=1}^n \nu_j A_{jl}\partial_l u\quad \mbox{on}\ \ \partial\Omega,
\end{equation}
where $\nu=(\nu_j)_{j=1}^n$ is the exterior unit normal to $\partial\Omega$. We next present two specific examples within such a formulation. Let $g=(g_{ij})$ be a Riemannian metric on $\Omega$, namely a covariant symmetric 2-tensor, and $g^{-1}=(g^{ij})$. Introduce the Laplace-Beltrami operator, in local coordinates, as follows
\begin{equation}\label{eq:lb1}
\Delta_g u=\frac{1}{\sqrt{|g|}}\partial_i\big(\sqrt{|g|} g^{ij}\partial_j u \big). 
\end{equation}
Consider the transmission eigenvalue problem 
\begin{equation}\label{eq:thg1}
\begin{cases}
\big(\Delta_{g}+k^2(1+V)\big) u=0\ &\ \mbox{in}\ \ \Omega,\medskip\\
(\Delta_{\widetilde g}+k^2(1+\widetilde V)) v=0\ &\ \mbox{in}\ \ \Omega,\medskip\\
u=v,\ \ \partial_{\nu_g} u=\partial_{\nu_{\widetilde g}} v\ &\ \mbox{on}\ \partial\Omega,
\end{cases}
\end{equation}
where $(g, V)\equiv\hspace*{-4.1mm}\backslash\, (\widetilde g, \widetilde V)$. A particular case is given by taking $\widetilde g=g_0$ with $g_0$ signifying the Euclidean metric and $\widetilde V=0$, which is known as the transmission eigenvalue problem associated with the anisotropic Helmholtz equation.  The other example is the transmission eigenvalue problem associated with the Lam\'e operator. Let $\mathbf{C}=(C_{ijpq})_{i,j,p,q=1}^n$ be a 4-rank tensor satisfying 
\[
C_{ijpq}=C_{pqij}\quad\mbox{and}\quad C_{ijpq}=C_{jipq}=C_{ijqp},\quad i,j,p,q=1,2,\ldots, n. 
\]
$\mathbf{C}$ is known as the stiffness tensor of an elastic material. Introduce the Lam\'e operator as follows
\begin{equation}\label{eq:lame1}
\mathcal{L}_{\mathbf{C}} \mathbf{u}:=\nabla\cdot(\mathbf{C}:\nabla\mathbf{u})=\sum_{j,p,q=1}^n\partial_j\big(C_{ijpq}\partial_q u_p \big),\quad \mathbf{u}=(u_i)_{i=1}^n,
\end{equation}
Let $\rho=(\rho_{ij})_{i,j=1}^n$ be a symmetric-positive-definite-matrix-valued function, which signifies the density tensor of an elastic material. Let $(\mathbf{C}, \rho)\equiv\hspace*{-3.5mm}\backslash\, (\widetilde{\mathbf{C}}, \widetilde \rho)$ be two sets of elastic material tensors, and consider the following transmission eigenvalue problem 
\begin{equation}\label{eq:thc1}
\begin{cases}
\big(\mathcal{L}_{\mathbf{C}}+k^2\rho\big) \mathbf{u}=0\ &\ \mbox{in}\ \ \Omega,\medskip\\
(\mathcal{L}_{\widetilde{\mathbf{C}}}+k^2\widetilde\rho) \mathbf{v}=0\ &\ \mbox{in}\ \ \Omega,\medskip\\
\mathbf{u}=\mathbf{v},\ \ \partial_{\nu_{\mathbf{C}}} \mathbf{u}=\partial_{\nu_{\widetilde{\mathbf{C}}}} \mathbf{v}\ &\ \mbox{on}\ \partial\Omega,
\end{cases}
\end{equation}
where 
\[
\partial_{\nu_{\mathbf{C}}}\mathbf{u}:=\left( \sum_{j,p,q=1}^n \nu_jC_{ijpq}\partial_q u_p \right)_{i=1}^n
\]
is known as the traction of the elastic displacement field $\mathbf{u}$. It is noted that if $\rho=\alpha\cdot I$ and 
\[
C_{ijpq}=\beta \delta_{ij}\delta_{pq}+\gamma(\delta_{ip}\delta_{jq}+\delta_{iq}\delta_{jp}),
\]
with $\alpha, \beta$ and $\gamma$ being certain real scalar functions in $\Omega$, $(\mathbf{C}, \rho)$ is referred to as an isotropic elastic material. Here, $\delta$ is the conventional delta function. The transmission eigenvalue problem in the isotropic case can be formulated accordingly, which has received more studies in the literature. 

Finally, we present two more types of transmission eigenvalue problems, which to our knowledge, are largely new to the literature. The first one is called the partial-data transmission eigenvalue problems. We take \eqref{eq:th1} as a simple example to illustrate the formulation. Let $\Gamma, \Gamma'$ be two subsets of $\partial\Omega$. Consider the transmission eigenvalue problem
\begin{equation}\label{eq:th1p}
\begin{cases}
\big(\Delta+k^2(1+V)\big) u=0\ &\hspace*{-.3cm} \mbox{in}\ \ \Omega,\medskip\\
(\Delta+k^2) v=0\ &\hspace*{-.3cm} \mbox{in}\ \ \Omega,\medskip\\
u|_\Gamma=v|_{\Gamma},\ \ \partial_\nu u|_{\Gamma'}=\partial_\nu v|_{\Gamma'}. 
\end{cases}
\end{equation}
The formulation of \eqref{eq:th1p} is clearly more general than \eqref{eq:th1} and indeed, if $\Gamma=\Gamma'=\partial \Omega$, it reduces to \eqref{eq:th1}. We remark that it may happen that $\Gamma=\emptyset$ or $\Gamma'=\emptyset$. The formulation of the partial-data transmission eigenvalue problems can be easily extended to the case \eqref{eq:tep1}. Those new eigenvalue problems may have implications to the partial-data inverse boundary value problems which we shall discuss in what follows. The second one is to generalize  \eqref{eq:tep1} to the case where the PDOs $P_a$ and $P_b$ could be nonlinear. As an illustrating example, we present the following one:
\begin{equation}\label{eq:thp1}
\begin{cases}
\big(\Delta_{p}+k^2(1+V)\big) u=0\ &\ \mbox{in}\ \ \Omega,\medskip\\
\big(\Delta_{\widetilde p}+k^2(1+\widetilde V)\big) v=0\ &\ \mbox{in}\ \ \Omega,\medskip\\
u=v,\ \ \partial_{\nu_p} u=\partial_{\nu_{\widetilde p}} v\ &\ \mbox{on}\ \partial\Omega,
\end{cases}
\end{equation}
where the $p$-Laplacian for $1<p<\infty$ is defined by 
\[
\Delta_p u:=\nabla\cdot(|\nabla u|^{p-2}\nabla u)\quad\mbox{and}\quad \partial_{\nu_p} u=|\nabla u|^{p-2}\partial_\nu u. 
\]
It is assumed that $(p, V)\equiv\hspace*{-3.8mm}\backslash\ (\widetilde p, \widetilde V)$.

\section{Historical background and relevant discussions}

In the previous section, we introduced the general formulation of the transmission eigenvalue problems. In this section, we provide a general account of the historical background of the transmission eigenvalue problems as well as some relevant discussions. Throughout the rest of the paper, we shall mainly consider \eqref{eq:th1} for our discussion. Nevertheless, it is emphasized that the extension to the other types of transmission eigenvalue problems associated with different physical systems should be clear.

\subsection{Direct and inverse acoustic scattering problems}

We introduce the time-harmonic direct and inverse acoustic scattering problems (cf. \cite{CK}). Let $\eta\in L^\infty(\mathbb{R}^n)$, $n\geq 2$, be such that $\mathrm{supp}(\eta-1)\subset\Omega$. $\eta$ signifies the refractive index of an acoustic medium whose inhomogeneity is supported in $\Omega$. For simplicity, we assume that $\eta$ is real valued and for notational consistence, we set
\begin{equation}\label{eq:rf1}
\eta^2=1+V,\ \ \ \mbox{i.e.}\ \ V=\eta^2-1. 
\end{equation}
Clearly, $V\in L^\infty(\mathbb{R}^n)$ and $\mathrm{supp}(V)\subset\Omega$. Let $k\in\mathbb{R}_+$ denote the wavenumber of a time-harmonic acoustic wave and $u^i$ be an incident wave which is an entire solution to $(\Delta+k^2) u^i=0$ in $\mathbb{R}^n$. The impingement of the wave field $u^i$ on the acoustic scatterer $(\Omega, V)$ generates the acoustic scattering. Let $u$ and $u^s=u-u^i$ respectively denote the total and scattered wave fields. Then the acoustic scattering is described by the following Helmholtz system:
\begin{equation}\label{eq:as1}
\begin{cases}
& \Delta u+k^2(1+V) u=0\quad\mbox{in}\ \ \mathbb{R}^n,\medskip\\
& u=u^i+u^s,\medskip\\
&\displaystyle{\lim_{r\rightarrow\infty} r^{\frac{n-1}{2}}\left(\partial_r-\mathrm{i}k\right) u^s=0,}
\end{cases}
\end{equation}
where $r=|x|$, $x\in\mathbb{R}^n$, and $\partial_r u^s:=\hat x\cdot \nabla u^s(x)$, $\hat x:=x/|x|\in\mathbb{S}^{n-1}$. The last limit in \eqref{eq:as1} is known as the Sommerfeld radiation condition, which holds uniformly in the angular variable $\hat x$ and characterizes the outgoing nature of the scattered field. There exists a unique solution $u\in H_{loc}^2(\mathbb{R}^n)$ to \eqref{eq:as1} and moreover it admits the following asymptotic expansion (cf. \cite{CK}):
\begin{equation}\label{eq:far1}
u^s(x)=\frac{e^{\mathrm{i}k|x|}}{|x|^{(n-1)/2}} u_\infty(\hat x)+\mathcal{O}\left(\frac{1}{|x|^{(n+1)/2}}\right)\ \ \ \mbox{as}\ \ |x|\rightarrow\infty. 
\end{equation}
$u_\infty(\hat x)$ is known as the far-field pattern of $u^s$ and the correspondence between $u^s$ and $u_\infty$ is one-to-one. 

Let $u^i(x)=e^{\mathrm{i}kx\cdot d}$, $d\in\mathbb{S}^{n-1}$, which is known as a time-harmonic plane wave with $d$ signifying the incident direction. Set
\begin{equation}\label{eq:fn1}
u_\infty^k(\hat x, d)=u_\infty(\hat x; e^{\mathrm{i}kx\cdot d}),
\end{equation}
which denotes the far-field pattern associated with the plane wave. $u_\infty^k(\hat x, d)$, $(\hat x, d)\in \mathbb{S}^{n-1}\times\mathbb{S}^{n-1}$, is known as the scattering amplitude at $k\in\mathbb{R}_+$. Define the Herglotz operator $\mathcal{H}: L^2(\mathbb{S}^{n-1})\mapsto \mathscr{A}(\mathbb{R}^n)$ with $\mathscr{A}(\mathbb{R}^n)$ denoting the space of analytic functions over $\mathbb{R}^n$: 
\begin{equation}\label{eq:herg1}
\mathcal{H}(g)(x):=\int_{\mathbb{S}^{n-1}} e^{\mathrm{i}kx\cdot d} g(d)\, ds(d), \quad g\in L^2(\mathbb{S}^{n-1}). 
\end{equation}
$v_g(x)=\mathcal{H}(g)(x)$ is known as a Herglotz wave, which is the linear superposition of the plane waves with a density function $g(d)$. We have the following denseness property of the Hergoltz waves. 
\begin{Lemma}[\cite{Wec}]\label{lem:Herg}
Let $\Omega \Subset \mathbb R^n$ be a bounded Lipschitz domain and  ${\mathbf H}_k$ be the space of all the Herglotz wave functions of the form \eqref{eq:herg1}. Define 
$$
{\mathbf S}_k(\Omega ) =  \{u\in C^\infty (\Omega)~|~ \Delta u+k^2u=0\}
$$
and 
$$
{\mathbf H}_k(\Omega ) =  \{u|_\Omega~|~ u\in {\mathbf H}_k\}. 
$$
Then  ${\mathbf H}_k(\Omega )$ is dense in ${\mathbf S}_k(\Omega )  \cap  L^2 ( \Omega )$ with respect to the topology induced by the $H^1(\Omega)$-norm. 
\end{Lemma}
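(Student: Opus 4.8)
\emph{Proof strategy.} The plan is to argue by duality through the Hahn--Banach theorem, reducing the density assertion to a uniqueness statement that I will close with Rellich's lemma. Since $\mathbf{H}_k(\Omega)$ is a linear subspace of the Hilbert space $H^1(\Omega)$, it is dense in $\mathbf{S}_k(\Omega)\cap L^2(\Omega)$ (in the $H^1$-topology) provided every bounded linear functional $\phi\in (H^1(\Omega))^*$ that annihilates $\mathbf{H}_k(\Omega)$ necessarily annihilates all of $\mathbf{S}_k(\Omega)\cap L^2(\Omega)$. So I would fix such a $\phi$ and represent it, using the standard structure of $(H^1(\Omega))^*$, as $\phi(u)=\int_\Omega (f_0\,u+\mathbf{f}_1\cdot\nabla u)\,dx$ with $f_0\in L^2(\Omega)$ and $\mathbf{f}_1\in L^2(\Omega)^n$; equivalently $\phi(u)=\langle F,u\rangle$ for the compactly supported distribution $F=f_0-\operatorname{div}\mathbf{f}_1$ on $\mathbb{R}^n$.

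First I would exploit the annihilation hypothesis. Because the map $d\mapsto e^{\mathrm{i}kx\cdot d}\big|_\Omega$ is continuous from $\mathbb{S}^{n-1}$ into $H^1(\Omega)$, the Herglotz integral \eqref{eq:herg1} converges in $H^1(\Omega)$ and $\phi$ may be moved inside it, giving $0=\phi(\mathcal{H}(g))=\int_{\mathbb{S}^{n-1}}\phi(e^{\mathrm{i}kx\cdot d})\,g(d)\,ds(d)$ for every $g\in L^2(\mathbb{S}^{n-1})$. Hence the continuous function $d\mapsto\phi(e^{\mathrm{i}kx\cdot d})=\langle F,e^{\mathrm{i}kx\cdot d}\rangle$ vanishes identically on $\mathbb{S}^{n-1}$.

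Next I would form the radiating volume potential $p=\Phi_k*F$, where $\Phi_k$ is the outgoing fundamental solution of $\Delta+k^2$; then $(\Delta+k^2)p=-F$ in $\mathbb{R}^n$, $p$ satisfies the Sommerfeld radiation condition, and away from $\overline\Omega$ it is a radiating Helmholtz solution. Its far-field pattern is, up to a dimensional constant, $p_\infty(\hat x)=\gamma_n\langle F,e^{-\mathrm{i}k\hat x\cdot y}\rangle=\gamma_n\,\phi(e^{\mathrm{i}kx\cdot(-\hat x)})$, which vanishes by the previous step. Rellich's lemma then forces $p\equiv 0$ in the unbounded component of $\mathbb{R}^n\setminus\overline\Omega$, so that the exterior Cauchy data $p|_{\partial\Omega}$ and $\partial_\nu p|_{\partial\Omega}$ are zero. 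Finally, for any $u\in\mathbf{S}_k(\Omega)\cap L^2(\Omega)\cap H^1(\Omega)$ I would apply Green's second identity on $\Omega$: since $(\Delta+k^2)u=0$ and the Cauchy data of $p$ vanish on $\partial\Omega$, all terms cancel and $\phi(u)=\langle F,u\rangle=-\langle(\Delta+k^2)p,u\rangle=0$, which is exactly what Hahn--Banach requires.

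The main obstacle I anticipate is not the soft functional-analytic frame but the boundary-regularity bookkeeping on a merely Lipschitz domain. Concretely, I must justify that $p$ has matching (single-valued) Cauchy data across $\partial\Omega$, so that vanishing from the exterior yields traces usable in Green's identity; this is delicate because $F$ is only an $(H^1)^*$-distribution, whose $\operatorname{div}\mathbf{f}_1$ part could a priori create a normal-derivative jump. I would handle this by noting that $p\in H^1_{\mathrm{loc}}(\mathbb{R}^n)$ globally, so its $H^{1/2}$ Dirichlet trace is single-valued, and by interpreting the Neumann data and Green's identity in the weak $H^{1/2}$--$H^{-1/2}$ duality valid for $H(\Delta,\Omega)$-functions, which accommodates the low regularity of both $p$ and $u$. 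A secondary point to dispatch is connectivity: Rellich controls only the unbounded component of the complement, so if $\partial\Omega$ also bounds interior holes one must supplement the argument with unique continuation (or restrict to $\Omega$ whose complement is connected).
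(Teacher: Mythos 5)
The paper itself offers no proof of this lemma: it is quoted directly from Weck \cite{Wec}, so your proposal can only be measured against the standard argument in the literature. Your duality scheme is exactly that standard route, and its skeleton is sound: Hahn--Banach reduces the density claim to showing that any $\phi\in (H^1(\Omega))^*$ annihilating all Herglotz waves annihilates every solution; the representation $\phi=\langle F,\cdot\rangle$ with $F=f_0-\operatorname{div}\mathbf{f}_1$ compactly supported, the commutation of $\phi$ with the $H^1(\Omega)$-valued Bochner integral \eqref{eq:herg1}, the potential $p=\Phi_k*F$ with far field proportional to $\langle F,e^{-\mathrm{i}k\hat x\cdot y}\rangle=0$, and Rellich's lemma are all correct steps. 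Your tacit restriction to $u\in \mathbf{S}_k(\Omega)\cap H^1(\Omega)$ in the last step is also the only sensible reading of the statement, since an $L^2$ solution on a Lipschitz domain need not lie in $H^1(\Omega)$.

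The two points you defer as ``bookkeeping'' are, however, exactly where the substance lies, and the tools you name for them do not work as stated. (a) The $H^{1/2}$--$H^{-1/2}$ duality for $H(\Delta,\Omega)$-functions applies to $u$ (where $\Delta u=-k^2u\in L^2(\Omega)$) but not to $p$: inside $\Omega$ one has $\Delta p=-k^2p-f_0+\operatorname{div}\mathbf{f}_1\notin L^2(\Omega)$, so $p|_\Omega\notin H(\Delta,\Omega)$ and its interior Neumann trace has no $H^{-1/2}$ meaning; the normal jump you worry about is genuinely present. The repair is to never introduce Neumann data of $p$ at all: assuming (as you must, see (b)) that $\mathbb{R}^n\setminus\overline\Omega$ is connected, $p\in H^1_{\mathrm{loc}}(\mathbb{R}^n)$ vanishes outside $\overline\Omega$, so Lipschitz regularity gives $p|_\Omega\in H^1_0(\Omega)$; for $\psi\in C_c^\infty(\mathbb{R}^n)$ a single integration by parts, using only the vanishing Dirichlet trace of $p$, yields $\phi(\psi|_\Omega)=\langle F,\psi\rangle=-\int_\Omega p\,(\Delta+k^2)\psi\,dx=\int_\Omega(\nabla p\cdot\nabla\psi-k^2p\psi)\,dx$; both ends are $H^1(\Omega)$-continuous and $C^\infty(\overline\Omega)$ is dense in $H^1(\Omega)$, so the identity holds with $\psi$ replaced by $u$; finally $p\in H^1_0(\Omega)$ is an admissible test function in the weak formulation of $(\Delta+k^2)u=0$, whence $\phi(u)=0$. (b) More seriously, unique continuation cannot dispatch the holes: $p$ solves an inhomogeneous equation on $\overline\Omega$, so its vanishing in the unbounded component of $\mathbb{R}^n\setminus\overline\Omega$ cannot be continued into a bounded one --- and no argument can close this case, because the lemma is false for such $\Omega$. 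Indeed, for the annulus $\Omega=\{1<|x|<2\}$ take $u=\Phi(\cdot,0)|_\Omega$, an entire-free solution that is smooth on $\overline\Omega$; if $v$ is an entire solution with $v(0)\neq0$ and $\chi\in C_c^\infty((1,2))$ with $\int\chi=1$, then $w\mapsto\int_1^2\chi(r)\int_{\partial B_r(0)}\bigl(w\,\partial_\nu v-v\,\partial_\nu w\bigr)\,ds\,dr$ is an $H^1(\Omega)$-continuous functional that kills every Herglotz wave by Green's identity, yet equals $\pm v(0)\neq0$ at $u$ by Green's representation formula. So connectedness of $\mathbb{R}^n\setminus\overline\Omega$ is a necessary hypothesis (left implicit in the lemma as stated here), and your parenthetical fallback of simply assuming it is the only correct option.
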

Using a Herglotz wave $v_g$ as an incident field, by the linearity of the Helmholtz system \eqref{eq:as1} with respect to the incident wave, one readily has
\begin{equation}\label{eq:herg2}
u_\infty(\hat x; v_g)=u_\infty(\hat x; \mathcal{H}(g(d)))=\mathcal{H}(u_\infty^k(\hat x, d))=\int_{\mathbb{S}^{n-1}} u_\infty^k(\hat x, d) g(d)\, ds(d). 
\end{equation}
Define the far-field operator $\mathcal{F}: L^2(\mathbb{S}^{n-1})\mapsto L^2(\mathbb{S}^{n-1})$ as follows:
\begin{equation}\label{eq:fo1}
\mathcal{F}(g)(\hat x)=\int_{\mathbb{S}^{n-1}} u_\infty^k(\hat x, d) g(d)\, ds(d). 
\end{equation}
By Lemma~\ref{lem:Herg}, we readily see that the far-field operator $\mathcal{F}$ actually contains all the possible scattering information from the scatterer $(\Omega, V)$. Hence, the direct scattering problem is to determine the far-field operator $\mathcal{F}$ for a given scatterer $(\Omega, V)$. Reversely, the inverse scattering problem is to determine the scatterer $(\Omega, V)$ by knowledge of the far-field operator:
\begin{equation}\label{eq:ip1}
\mathcal{F}\rightarrow (\Omega, V), 
\end{equation}
which is equivalent to determining $(\Omega, V)$ by knowledge of the associated scattering amplitude $u_\infty^k(\hat x, d)$. By introducing an operator $\mathcal{S}$ which sends to the scatterer $(\Omega, V)$ to the associated scattering amplitude $u_\infty^k(\hat x, d)$, the inverse scattering problem can be recast as the following operator equation
\begin{equation}\label{eq:ip2}
\mathcal{S}(\Omega, V)=u_\infty^k(\hat x, d). 
\end{equation}
It is directly verified that the inverse scattering problem \eqref{eq:ip1}/\eqref{eq:ip2} is nonlinear.

\subsection{Linear sampling method and transmission eigenvalue problem}\label{sect:2.2}

The linear sampling method (LSM) is a classical qualitative method for the inverse scattering problem \eqref{eq:ip2}, which makes use of the far-field data $u_\infty^k(\hat x, d)$ for all $\hat x, d\in\mathbb{S}^{n-1}$ but a fixed $k\in\mathbb{R}_+$ and aims to recover the support of scatterer, namely $\Omega$, independent of its content $V$. The LSM was first proposed in \cite{CKir} and has inspired a lot of subsequent studies \cite{CakCol, KirGrin}. The core of the LSM is the following far-field equation: 
\begin{equation}\label{eq:ff1}
\mathcal{F}(g)(\hat x)=\Phi_\infty(\hat x, z), \quad \Phi_\infty(\hat x, z):=\gamma_n e^{-\mathrm{i}k\hat x\cdot z}, \quad z\in\mathbb{R}^n,
\end{equation}
where $\Phi_\infty(\hat x, z)$ is the far-field pattern of $\Phi(x, z)$ and $\gamma_n$ is a dimensional constant. Here, $\Phi(x, z)$ is the outgoing fundamental solution to $-\Delta-k^2$ at $z\in\mathbb{R}^n$ given by
\[
\Phi(x, z)=\frac{1}{4\pi}\frac{e^{\mathrm{i}k|x-z|}}{|x-z|}\quad \mbox{when}\ \ n=3; \quad \frac{\mathrm{i}}{4} H_0^{(1)}(k|x-z|)\quad\mbox{when}\ \ n=2,
\]
where $H_0^{(1)}$ is the zeroth order Hankel function of the first kind. In general, the far-field equation \eqref{eq:ff1} is not solvable (exactly), especially considering that the integral kernel for $\mathcal{F}$, namely $u^k_\infty(\hat x, d)$, is given by measurement data with noise. Nevertheless, if $\mathcal{F}$ satisfies a certain ``generic condition", \eqref{eq:ff1} can be solved approximately, say by the Tikhonov regularization approach. Let $g_z^\varepsilon(\hat x)$ denote the approximate solution mentioned above, where $\varepsilon\ll 1$ signifies the regularization parameter and $z\in\mathbb{R}^n$ signifies a sampling point. The LSM uses $\|g_z^\varepsilon\|_{L^2(\mathbb{S}^{n-1})}$ as an indicator function for imaging $\Omega$, whose value is relatively large if $z\in\mathbb{R}^n\backslash\Omega$ and relatively small if $z\in\Omega$. In fact, theoretically, one has $\lim_{\varepsilon\rightarrow +0}\|g_z^\varepsilon\|_{L^2(\mathbb{S}^{n-1})}=\infty$ when $z\in \mathbb{R}^n\backslash \Omega$. Then the LSM works as follows. First, one selects a sampling mesh $\mathcal{T}_h$ containing the scatterer $\Omega$. Second, for each mesh grid point $z\in \mathcal{T}_h$, one solves the corresponding far-field equation \eqref{eq:ff1} to obtain $g_\varepsilon^z$. Finally, by selecting a cut-off value $c_0$, one can distinguish the interior and exterior of $\Omega$ according to the criterion: $z\in \Omega$ if $\|g_z^\varepsilon\|_{L^2(\mathbb{S}^{n-1})}\leq c_0$; and $z\in \mathbb{R}^n\backslash\Omega$ if $\|g_z^\varepsilon\|_{L^2(\mathbb{S}^{n-1})}> c_0$.

Before discussing more about the ``generic condition" on the far-field operator $\mathcal{F}$ ( which shall lead to the interior transmission eigenvalue problem), it is interesting to note that the inverse scattering problem \eqref{eq:ip2} is nonlinear, whereas solving \eqref{eq:ff1} is a linear process. Where is the nonlinearity of the inverse problem hidden in the LSM? In fact, the nonlinearity lies in deciding whether a given sampling point $z$ belongs to the interior or exterior of $\Omega$; that is, the selection of an appropriate cut-off value  is a nonlinear process, which becomes the most challenging part of the LSM. It is worth mentioning that a deterministic method of selecting an effective cut-off value was developed in \cite{LiLZ1} and an efficient multilevel procedure in classifying the interior and exterior sampling mesh points was developed in \cite{LLW1,LiLZ2}. Now, we return to the discussion on the aforementioned ``generic condition" which guarantees the ``approximate solvability" of the far-field equation \eqref{eq:ff1}. This is to require that the the far-field operator $\mathcal{F}$ has a dense range in $L^2(\mathbb{S}^{n-1})$. Since $\mathrm{Range}(\mathcal{F})=\mathrm{Ker}(\mathcal{F}^*)^\perp$, it is equivalent to requiring that $\mathcal{F}^*$ is injective. Here, 
\begin{equation}\label{eq:adj1}
\begin{split}
\mathcal{F}^*(g)(\hat x)=&\int_{\mathbb{S}^{n-1}} u_\infty^k(d, \hat x) g(d)\, ds(d)\\
=& \int_{\mathbb{S}^{n-1}} u_\infty^k(-\hat x, -d) g(d)\, ds(d), \quad \hat x\in\mathbb{S}^{n-1}, 
\end{split}
\end{equation}
where we make use of the reciprocity relation $u_\infty^k(\hat x, d)=u_\infty^k(-d, -\hat x)$ (cf. \cite{CK}). Suppose that there exists $g\in L^2(\mathbb{S}^{n-1})$ satisfying $\mathcal{F}^*(g)=0$, namely,
\begin{equation}\label{eq:adj2}
\int_{\mathbb{S}^{n-1}} u_\infty^k(\hat x, d) \tilde g(d)\, ds(d)=0,\quad \tilde g(d):=g(-d). 
\end{equation}
Then, by \eqref{eq:herg2}, one readily sees that $u_\infty(\hat x; v_{\tilde g})=0$. Let $u_{\tilde g}$ be the solution to the scattering system \eqref{eq:as1} with $u^i=v_{\tilde g}$ and $u_{\tilde g}^s=u_{\tilde g}-v_{\tilde g}$. By the Rellich theorem, we have from $u_\infty(\hat x; v_{\tilde g})=0$ that $u_{\tilde g}^s=0$ in $\mathbb{R}^n\backslash\overline{\Omega}$. By virtue of the transmission conditions of $u_{\tilde g}$ across $\partial\Omega$, one notes that
\begin{equation}\label{eq:transc1}
u_{\tilde g}\big|^-_{\partial\Omega}=u_{\tilde g}\big|^+_{\partial\Omega},\quad \partial_\nu u_{\tilde g} \big|^-_{\partial\Omega}=\partial_\nu u_{\tilde g}\big|^+_{\partial\Omega},
\end{equation}
where $\pm$ signifies the traces from the outside and inside of $\Omega$, respectively. Since $u_{\tilde g}^s=0$ in $\mathbb{R}^n\backslash\overline{\Omega}$, we have that
\begin{equation}\label{eq:transc2}
u_{\tilde g}\big|^+_{\partial\Omega}=v_{\tilde g}|^+_{\partial\Omega}=v_{\tilde g}|^-_{\partial\Omega},\quad \partial_\nu u_{\tilde g}\big|^+_{\partial\Omega}=\partial_\nu v_{\tilde g}|^+_{\partial\Omega}=\partial_\nu v_{\tilde g}|^-_{\partial\Omega}. 
\end{equation}
By combining \eqref{eq:transc1} and \eqref{eq:transc2}, we readily have that 
\begin{equation}\label{eq:thc1}
\begin{cases}
\big(\Delta+k^2(1+V)\big) u_{\tilde g}=0\ &\ \mbox{in}\ \ \Omega,\medskip\\
(\Delta+k^2) v_{\tilde g}=0\ &\ \mbox{in}\ \ \Omega,\medskip\\
u_{\tilde g}=v_{\tilde g},\ \ \partial_\nu u_{\tilde g}=\partial_\nu v_{\tilde g}\ &\ \mbox{on}\ \partial\Omega. 
\end{cases}
\end{equation}
That is, $(u_{\tilde g}|_{\Omega}, v_{\tilde g}|_{\Omega})$ is a pair of transmission eigenfunctions associated with the eigenvalue $k^2$. If $k^2$ is not a transmission eigenvalue associated with $(\Omega, V)$, then one has from \eqref{eq:thc1} that $v_{\tilde g}=0$ which in turn yields $\tilde g=0$ (cf. \cite{CK} ). Hence, for the good sake of the LSM, one should exclude the ``bad" transmission eigenvalues. In fact, this might be the major motivation for A. Kirsch to introduce and study the transmission eigenvalue problem in \cite{Kir}. One the other hand, we would like to point out that everything has two sides, and the ``failure" of the LSM at transmission eigenvalues can be used to the determine transmission eigenvalues by using the scattering data $u_\infty^k(\hat x, d)$ (cf. \cite{CCH,CHLW1}). 

\subsection{Spectral properties of the transmission eigenvalues }

The spectral properties of transmission eigenvalues have been extensively and intensively studied in the literature, and in many aspects, they resemble those for the classical Dirichlet/Neumann Laplacian. For example, the transmission eigenvalues are discrete, infinite and accumulating only at $\infty$, and for each transmission eigenvalue, the corresponding transmission eigenspace is finite dimensional. On the other hand, due to the distinct features of the transmission eigenvalue problems, there are some particular and unique properties possessed by the transmission eigenvalues, say e.g. the existence, discreteness, infiniteness of the transmission eigenvalues as well as the corresponding counting formula not only depend on the geometry of $\Omega$ but also critically rely on the inhomogeneous parameter $V$; and moreover, there exist complex eigenvalues due to the non-self-adjointness. This is a vibrant field of research with abundant results in the literature. There are papers surveying and reviewing those developments and we refer to \cite{CCH,CH,CKreview} and the references cited therein for the existing results in the literature and related open problems on the transmission eigenvalues.

\section{More connections to inverse scattering problems and invisibility cloaking}

In this section, we present more observation and discussion from our perspective on the connections between the transmission eigenvalue problems and several challenging problems related to inverse scattering theory and invisibility cloaking.

\subsection{Unique identifiability for inverse scattering problems}

Let us consider again the inverse scattering problem \eqref{eq:ip1}/\eqref{eq:ip2}. We count the cardinalities of the unknown scatterer $(\Omega, V)$ and the measurement data, respectively. Here, by cardinality, we mean the number of independent variables in a quantity. It is clear that the cardinality of $(\Omega, V)$ is $n$ in the generic case, and  $\{u_\infty^k(\hat x, d)\}_{k\in\mathbb{R}_+, d\in\mathbb{S}^{n-1}, \hat x\in\mathbb{S}^{n-1}}$ is $2n-1$. Hence, the inverse problem is overdetermined when full measurement data are used (in such a case, $2n-1>n$ for $n\geq 2$). In order to establish the unique identifiability of the inverse problem \eqref{eq:ip1}/\eqref{eq:ip2}, it would be helpful to introduce the following inverse boundary value problem. To that end, we introduce the following Cauchy data set:
\begin{equation}\label{eq:cauchy}
\mathcal{C}_V:=\{u|_{\partial\Omega}, \partial_\nu u|_{\partial\Omega}\}\in H^{1/2}(\partial\Omega)\times H^{-1/2}(\partial\Omega), 
\end{equation}
where $u\in H^1(\Omega)$ is any solution to 
\begin{equation}\label{eq:ee1}
\big(\Delta+k^2(1+V)\big)u=0\ \mbox{in}\ \Omega.
\end{equation} 
It is known that for any fixed $k\in\mathbb{R}_+$ (cf. \cite{CK}), 
\begin{equation}\label{eq:cauchy2}
\mathcal{F}\Longleftrightarrow \mathcal{C}_V. 
\end{equation}
In what follows, we shall also write $\mathcal{F}_V^k$ or $\mathcal{C}_V^k$ to specify the dependence on the wavenumber $k$ as well as the medium parameter $V$. 

\begin{Theorem}\label{thm:1}
Consider the inverse scattering problem \eqref{eq:ip1}/\eqref{eq:ip2} for $n\geq 3$. $\mathcal{F}_V^k$ for all $k\in\mathbb{R}_+$ uniquely determines $V$. 
\end{Theorem}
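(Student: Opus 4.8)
The plan is to convert the scattering data into boundary data and then recognize the resulting problem as a classical inverse boundary value problem for a Schr\"odinger operator, for which global uniqueness in dimension $n\geq 3$ is a celebrated theorem. First I would invoke the equivalence \eqref{eq:cauchy2}: knowledge of $\mathcal{F}_V^k$ at a \emph{single} wavenumber $k$ is equivalent to knowledge of the Cauchy data set $\mathcal{C}_V^k$ associated with the interior equation \eqref{eq:ee1} on $\partial\Omega$. Thus the hypothesis that $\mathcal{F}_V^k$ is available for all $k\in\mathbb{R}_+$ furnishes, in particular, $\mathcal{C}_V^k$ at any conveniently chosen frequency; as will be seen, one such frequency already suffices, so the full family over $k$ represents (as anticipated by the cardinality count preceding the statement) a heavily overdetermined data set.

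Next I would reformulate \eqref{eq:ee1} as a Schr\"odinger equation. Setting $q:=-k^2(1+V)=-k^2\eta^2$, with $\eta^2=1+V$ as in \eqref{eq:rf1} and $k$ fixed and known, equation \eqref{eq:ee1} becomes $(-\Delta+q)u=0$ in $\Omega$. The Cauchy data set $\mathcal{C}_V^k$ then coincides with the Cauchy data set of this Schr\"odinger operator, and whenever $0$ is not a Dirichlet eigenvalue of $-\Delta+q$ in $\Omega$ this set is exactly the graph of the Dirichlet-to-Neumann map $\Lambda_q\colon u|_{\partial\Omega}\mapsto \partial_\nu u|_{\partial\Omega}$. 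The residual point, that the selected $k^2$ might be a Dirichlet eigenvalue, is harmlessly handled either by using the discreteness of the exceptional set (having all $k\in\mathbb{R}_+$ at one's disposal) to pick a good frequency, or by arguing directly at the level of the Cauchy data set so that one never needs to invert the Dirichlet problem.

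The core of the argument is then the global uniqueness theorem of Sylvester and Uhlmann: for $n\geq 3$, the Cauchy data set (equivalently $\Lambda_q$) determines $q\in L^\infty(\Omega)$ uniquely. Concretely, if $q_1,q_2$ produce the same data, Green's (Alessandrini-type) identity yields
\begin{equation}\label{eq:bilin-plan}
\int_\Omega (q_1-q_2)\,u_1 u_2\,dx=0\quad\text{for all solutions } (-\Delta+q_j)u_j=0 \text{ in }\Omega.
\end{equation}
One then substitutes complex geometric optics (CGO) solutions $u_j=e^{x\cdot\zeta_j}(1+\psi_j)$ with $\zeta_j\in\mathbb{C}^n$, $\zeta_j\cdot\zeta_j=0$, $|\zeta_j|\to\infty$, arranged so that $\zeta_1+\zeta_2=\mathrm{i}\xi$ for a prescribed $\xi\in\mathbb{R}^n$; passing to the limit in \eqref{eq:bilin-plan} shows that the Fourier transform of $q_1-q_2$ vanishes at every $\xi$, whence $q_1=q_2$. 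Since $k$ is known, recovering $q$ recovers $V=-q/k^2-1$, which completes the argument.

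The hard part will be precisely the CGO construction and the justification of the limit in \eqref{eq:bilin-plan}: one must solve $(-\Delta+q)\big(e^{x\cdot\zeta}(1+\psi)\big)=0$, i.e.\ invert the conjugated operator $e^{-x\cdot\zeta}(-\Delta)e^{x\cdot\zeta}$, and control the remainder with the uniform bound $\|\psi\|_{L^2(\Omega)}=\mathcal{O}(1/|\zeta|)$. This hinges on the Sylvester--Uhlmann/Faddeev resolvent estimate for the symbol $|\zeta|^2+2\,\zeta\cdot\xi$, which is available exactly when $n\geq 3$; this is where the dimensional restriction enters and why the planar case genuinely requires different machinery. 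A final routine point is to fix, once and for all, a known reference domain (for instance a ball containing $\mathrm{supp}(V)$) on whose boundary the Cauchy data are read off, so that recovering $q$ on that domain recovers $V$ as a function on $\mathbb{R}^n$.
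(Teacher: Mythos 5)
Your proposal is correct, but it takes a genuinely different route from the paper. You fix a single frequency, rewrite \eqref{eq:ee1} as a Schr\"odinger equation $(-\Delta+q)u=0$ with $q=-k^2(1+V)$, and then invoke the full Sylvester--Uhlmann machinery: complex geometric optics solutions $e^{x\cdot\zeta}(1+\psi)$ with the remainder controlled by the Faddeev/Sylvester--Uhlmann resolvent estimate. This proves the stronger statement that a \emph{single} frequency already determines $V$ (which the paper acknowledges separately, citing \cite{SU}), but it requires the hardest analytic ingredient of that theory, namely the construction of $\psi$ and the bound $\|\psi\|_{L^2}=\mathcal{O}(1/|\zeta|)$. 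The paper instead exploits the fact that the data set is overdetermined: starting from the same Alessandrini-type identity \eqref{eq:identity1}, which holds for every $k\in\mathbb{R}_+$, it passes to the low-frequency limit $k\rightarrow+0$, so that the solutions $u_j$ converge to arbitrary harmonic functions $u_j^0$ (citing \cite{LU} for this limit), and the identity becomes \eqref{eq:identity3}. At that point Calder\'on's classical \emph{exact} exponential solutions $u_j^0=\exp(\rho_j\cdot x)$, $\rho_j\cdot\rho_j=0$, $\rho_1+\rho_2=\mathrm{i}\xi$, can be substituted with no correction term whatsoever, and the Fourier transform of $V_1-V_2$ vanishes immediately. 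In short: your argument buys a formally stronger (fixed-frequency) conclusion at the price of the CGO construction and its resolvent estimates, whereas the paper trades the redundancy in $k$ for a proof that is essentially elementary modulo the $k\rightarrow+0$ convergence of solutions; both correctly confine the dimensional restriction $n\geq 3$ to the availability of the respective exponential solutions, and both handle the possible-Dirichlet-eigenvalue issue implicitly by working at the level of Cauchy data sets rather than Dirichlet-to-Neumann maps.
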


\begin{proof}
By \eqref{eq:cauchy2}, it is sufficient for us to prove that $\mathcal{C}_V^k$ uniquely determines $V$. Let $V_j\in L^\infty(\Omega)$, $j=1, 2$, be such that
\[
\mathcal{C}_{V_1}^k=\mathcal{C}_{V_2}^k\quad\forall k\in\mathbb{R}_+. 
\]
Then one can show that the following integral identity (cf. \cite{SU})
\begin{equation}\label{eq:identity1}
\int_{\Omega} (V_1-V_2) u_1 u_2=0,
\end{equation}
which holds for any $u_j\in H^1(\Omega)$ satisfying 
\begin{equation}\label{eq:identity2}
\big(\Delta+k^2 (1+V_j)\big) u_j=0,\quad j=1,2.
\end{equation}
Note that \eqref{eq:identity1} and \eqref{eq:identity2} hold for any $k\in\mathbb{R}_+$. By letting $k\rightarrow+0$, one can show that $u_j\rightarrow u_j^0$ where $\Delta u_j^0=0$ in $\Omega$ (cf. \cite{LU}). Hence, we have from \eqref{eq:identity1} by passing $k\rightarrow +0$ that
\begin{equation}\label{eq:identity3}
\int_\Omega (V_1-V_2) u_1^0 u_2^0=0,  
\end{equation} 
where $u_j^0$, $j=1, 2$, are two arbitrary harmonic functions. By Calder\'on's classical argument \cite{C,SU}, one can construct solutions of the form:
\begin{equation}\label{eq:identity4}
u^0_j(x)=\exp(\rho_j\cdot x),\ \ \rho_j\in\mathbb{C}^n, \ \rho_j\cdot \rho_j=0,\ j=1, 2;\ \ \rho_1+\rho_2=\mathrm{i}\xi,\ \forall\xi\in\mathbb{R}^n. 
\end{equation}
Substituting \eqref{eq:identity4} into \eqref{eq:identity3}, one has
\begin{equation}\label{eq:identity5}
\int_\Omega (V_1-V_2)(x) \exp(\mathrm{i}x\cdot \xi)\, dx=0, 
\end{equation}
which readily yields that $V_1=V_2$. 
\end{proof}

It is remarked that the uniqueness result in Theorem~\ref{thm:1} also holds for $n=2$ since the products of harmonic functions are also dense in $L^2(\mathbb{R}^2)$. However, in order to prove this, one needs to make use of different techniques other than the construction in \eqref{eq:identity4} (cf. \cite{AP,IUY1}). It is interesting to point out that to our best knowledge, the unique identifiability result in Theorem~\ref{thm:1} and especially its proof, although simple, were not given in the literature. The uniqueness result in Theorem~\ref{thm:1} readily inspires a unique identifiability result associated with the time-dependent wave equation: 
\begin{equation}\label{eq:td1}
\frac{1}{c^2}\partial_t^2 w-\Delta w=0\quad \mbox{in}\ \ \Omega,
\end{equation}
where $c\in L^\infty(\Omega)$ and $|c|>c_0\in\mathbb{R}_+$.  
The measurement data for the inverse problem associated with \eqref{eq:td1} is the following dynamical Cauchy data set:
\begin{equation}\label{eq:td2}
\Lambda_c^t:=\{w|_{\partial\Omega}, \partial_\nu w|_{\partial\Omega}\},\ t\in\mathbb{R}_+,
\end{equation}
where $w(x, t)\in H^2(\mathbb{R}_+, H^1(\Omega))$ is any solution to \eqref{eq:td1}. The inverse problem is to recover $c$ by knowledge of $\Lambda_{c}^t$ for all $t\in\mathbb{R}_+$. Assuming the following temporal Fourier transform exists for $w(x, t)$ (by imposing a certain condition on $(\Omega, c)$):
\[
u(x, k)=\frac{1}{2\pi}\int_0^\infty w(x, t) e^{ik t}\ dt,\quad (x, k)\in\mathbb{R}^n\times \mathbb{R}_+,
\]
then \eqref{eq:td2} is reduced to \eqref{eq:ee1} with $1+V=c^{-2}$. Consequently, $(\Lambda_c^t, t\in\mathbb{R}_+)$ is equivalently reduced to $(\mathcal{C}_V^k, k\in\mathbb{R}_+)$. Hence, Theorem~\ref{thm:1} implies that $(\Lambda_c^t, t\in\mathbb{R}_+)$ uniquely determines $c$. It is clear that the dataset $(\Lambda_c^t, t\in\mathbb{R}_+)$ is overdetermined in determining $c$ (since the cardinality of $(\Lambda_c^t, t\in\mathbb{R}_+)$ is $2n-1$, and for a generic $c$, its cardinality is $n$). We would like to point out the connection to an important class of inverse problems in the literature, namely the so-called inverse problem with a single measurement. It asks whether one can uniquely determine $c$ by knowledge of $(w(x, t), \partial_\nu w(x, t))|_{(x, t)\in\partial\Omega\times\mathbb{R}_+}$ for a fixed solution $w(x, t)$ to \eqref{eq:td1}. The single-measurement dataset is obviously formally-determined in recovering $c$. The single-measurement inverse problems were resolved in generic scenarios by the classical Bukhgeim-Klibanov method \cite{BuhKli} of using tools from Carleman estimates; see also \cite{K, KT} and the references cited therein for many subsequent developments.   

Similar to our discussion above, for the inverse problem \eqref{eq:ip1}/\eqref{eq:ip2}, the dataset $(\mathcal{F}_V^k, k\in\mathbb{R}_+)$ is overdetermined for the determination of $V$ (recalled that $2n-1>n$, $n\geq 2$). It is natural to ask what is the optimal/minimal dataset in determining $V$. If one uses $\mathcal{F}_V^k$ for a fixed $k\in\mathbb{R}_+$ (which is equivalent to $\mathcal{C}_V^k$ for a fixed $k\in\mathbb{R}_+$), by direct counting, the cardinality for the measurement dataset is $2n-2$, whereas the cardinality of the unknown $V$ is $n$. Hence, the inverse problem is still overdetermined when $n\geq 3$ and it is formally-determined if $n=2$. In such a case, the uniqueness results were established in the seminal works \cite{SU} for $n\geq 3$ and \cite{AP} for $n=2$. However, the measurement dataset is still overdetermined (except for $n=2$). Next, we consider the following dataset
\begin{equation}\label{eq:fd1}
\Theta_V:=\{u^k_\infty(\hat x, d);\ \mbox{a fixed $d\in\mathbb{S}^{n-1}$ and all $(k, \hat x)\in\mathbb{R}_+\times\mathbb{S}^{n-1}$} \}. 
 \end{equation}
 The cardinality of $\Theta_V$ is $n$ and hence the inverse problem of determining $V$ by knowledge of $\Theta_V$ is formally determined. Physically, $\Theta_V$ is obtained by sending incident plane waves at a fixed impinging direction with varying frequencies and collecting the far-field data at all observation directions. By the reciprocity relation, namely $u_\infty^k(\hat x, d)=u_\infty^k(-\hat d, -\hat x)$, it can also be obtained by sending incident plane waves at all possible impinging directions with varying frequencies and collecting the far-field data at a fixed spot. For the unique identifiability issue, we let $V_j\in L^\infty(\Omega)$, $j=1, 2$, be such that 
 \begin{equation}\label{eq:fd2}
 \Theta_{V_1}=\Theta_{V_2}. 
 \end{equation}
Let $u_j$ be the total wave field of \eqref{eq:as1} associated with $V_j$ and $u^i=e^{\mathrm{i}kx\cdot d}$ for a fixed $d\in\mathbb{S}^{n-1}$ and all $k\in\mathbb{R}_+$. By \eqref{eq:fd2}, one can directly show that there holds the following transmission eigenvalue problem:
\begin{equation}\label{eq:thu1}
\begin{cases}
\big(\Delta+k^2(1+V_1)\big) u_1=0\ &\ \mbox{in}\ \ \Omega,\medskip\\
\big(\Delta+k^2(1+V_2)\big) u_2=0\ &\ \mbox{in}\ \ \Omega,\medskip\\
u_1=u_2,\ \ \partial_{\nu} u_1=\partial_{\nu} u_2\ &\ \mbox{on}\ \partial\Omega. 
\end{cases}
\end{equation}
Clearly, \eqref{eq:thu1} holds for all $k\in\mathbb{R}_+$. According to our earlier discussion, the transmission eigenvalues should be discrete. If this is the case, one can readily have a contradiction from \eqref{eq:thu1} unless $V_1=V_2$. That is, one can derive the unique identifiability result for determining $V$ by knowledge of $\Theta_V$. However, there is a technical challenge that the discreteness of the transmission eigenvalues for \eqref{eq:thu1} critically depends on the a-priori forms of $V_1$ and $V_2$. In fact, for all of the existing results on this topic, it is required that $V_1$ and $V_2$ are different from each other near $\partial\Omega$ (cf. \cite{CGH,CPS,LV,PS,S,R}). But for the transmission eigenvalue problem \eqref{eq:thu1} arising from the inverse problem, it would be too restrictive of imposing such a condition on $V_1$ and $V_2$. Nevertheless, we have the following conjecture:

\smallskip

\noindent{\bf Conjecture 1.}~\emph{Consider the inverse scattering problem \eqref{eq:ip2}. $\Theta_V$ unique determines $V$ generically. }

\smallskip

We would like to remark that the inverse problem in Conjecture~1 as well as its connection to the transmission eigenvalue problem \eqref{eq:thu1} were first proposed and investigated in \cite{HLL}. It is shown in two special cases: (1)~$V$ is constant;~(2)~$V$ is spherically symmetric that $V$ can be uniquely determined by $\Theta_V$. We believe that Conjecture~1 should hold for more general cases. To cast some light on this conjecture, one may consider the case that $V$ is a real-analytic function in $\Omega$. In order to establish the unique identifiability result in such a case, by a similar argument, one has the transmission eigenvalue problem \eqref{eq:thu1}. Since $V_1$ and $V_2$ are analytic in $\Omega$, there must be derivatives of a certain finite order for $V_1$ and $V_2$ on $\partial\Omega$ such that they are different from each, since otherwise all the derivatives of $V_1$ and $V_2$ on $\partial\Omega$ are the same, one immediately has $V_1=V_2$ in $\Omega$ by the analytic continuation. Then, under such a condition, it is very plausible to show the discreteness of the transmission eigenvalues for \eqref{eq:thu1}, which gives rise to a contradiction and in turn implies that $V_1=V_2$ in $\Omega$. Moreover, there is another perspective that can increase the plausibility of Conjecture~1. In fact, from our earlier discussion about the connection between the single-measurement inverse problem associated with the time-dependent wave equation \eqref{eq:td1} and its time-harmonic counterpart associated with \eqref{eq:ee1}, we readily see that $\Theta_V$ is actually equivalent to the single-measurement data of its time-dependent counterpart. Those are intriguing topics that are worth further investigations. 

Finally, we propose a partial-data inverse boundary value problem that is related to the partial-data transmission eigenvalue problem \eqref{eq:th1p}. Let $\Gamma$ and $\Gamma'$ be two open subsets of $\partial\Omega$. Associated with the Helmhotlz equation \eqref{eq:ee1}, we introduce the partial-data Cauchy set:
\begin{equation}\label{eq:cauchyp}
\mathcal{C}_V^{\Gamma, \Gamma'}:=\{u|_{\Gamma}, \partial_\nu u|_{\Gamma'}\}\in H^{1/2}(\Gamma)\times H^{-1/2}(\Gamma'), 
\end{equation}
where $u\in H^1(\Omega)$ is a solution to \eqref{eq:ee1}. The inverse boundary problem is to determine $V$ by knowledge of $\mathcal{C}_{V}^{\Gamma, \Gamma'}$. If $\Gamma=\Gamma'=\partial\Omega$, $\mathcal{C}_{V}^{\Gamma, \Gamma'}$ is reduced to $\mathcal{C}_V$ in \eqref{eq:cauchy}. There are results showing that if $\mathcal{C}_{V}^{\Gamma, \Gamma'}$ is given for all solutions to \eqref{eq:ee1} and a fixed $k\in\mathbb{R}_+$, under certain scenarios where $\Gamma, \Gamma'$ could be proper subsets of $\partial\Omega$, $V$ can be uniquely recovered; see \cite{IUY1,KSU}. However, it can be easily shown that in such a case the data used are over-determined when $n\geq 3$. Next, we consider a formally determined case. Set 
\begin{equation}\label{eq:cauchyp2}
\begin{split}
\mathcal{D}_V^k(\psi):=&\{ (u|_{\Gamma}, \partial_\nu u|_{\Gamma'}); u\in H^1(\Omega)\ \mbox{satisfies \eqref{eq:ee1}}\\
&\hspace*{2.5cm} \mbox{with $u|_{\partial \Omega}=\psi\in H^{1/2}(\partial\Omega)$ and $\mathrm{supp}(\psi)\subset\Gamma$} \}. 
\end{split}
\end{equation}
Let consider the following inverse problem
\begin{equation}\label{eq:pi1}
\mathcal{D}_V^k(\psi)\ \mbox{with a fixed $\psi$ and all $k\in\mathbb{R}_+$}\rightarrow V. 
\end{equation}
It can be shown that the inverse problem \eqref{eq:pi1} is formally determined. To our knowledge, this type of inverse problem is new to the literature. In order to establish the corresponding unique identifiability result. Suppose $V_1, V_2\in L^\infty(\Omega)$ fulfil that
\begin{equation}\label{eq:pi2}
\mathcal{D}_{V_1}^k(\psi)=\mathcal{D}_{V_2}^k(\psi)\ \ \mbox{for a fixed $\psi$ and all $k\in\mathbb{R}_+$. }
\end{equation}
One readily has from \eqref{eq:pi2} the following partial-data transmission eigenvalue problem:
\begin{equation}\label{eq:thu1pp1}
\begin{cases}
\big(\Delta+k^2(1+V_1)\big) u_1=0\ &\hspace*{-2cm} \mbox{in}\ \ \Omega,\medskip\\
\big(\Delta+k^2(1+V_2)\big) u_2=0\ &\hspace*{-2cm} \mbox{in}\ \ \Omega,\medskip\\
u_1|_{\Gamma}=u_2|_{\Gamma}=\psi|_{\Gamma},\ \ \partial_{\nu} u_1|_{\Gamma'}=\partial_{\nu} u_2|_{\Gamma'}, 
\end{cases}
\end{equation}
which holds for all $k\in\mathbb{R}_+$. Hence, if one can show the discreteness of the transmission eigenvalues for \eqref{eq:thu1pp1}, one can establish the unique identifiability for \eqref{eq:pi2}, namely $V_1=V_2$.

\subsection{Invisibility in wave scattering}\label{sect:3.2}

In this subsection, we discuss the connection between the invisibility in wave scattering and the transmission eigenvalue problems. Invisibility cloaking is concerned with certain mechanisms of making a target object undetectable with respect to certain wave probing means. This is a huge topic and we shall only consider a particular aspect, namely its connection to the transmission eigenvalue problems. 

We start our discussion by considering the scattering problem \eqref{eq:as1}. Let $u_\infty(\hat x; e^{\mathrm{i}kx\cdot d})$ be the far-field pattern defined in \eqref{eq:far1}  corresponding to the total field $u$ and incident field $u^i=\exp(\mathrm{i}kx\cdot d)$. If it happens that 
\begin{equation}\label{eq:i1}
u_\infty(\hat x; u^i)=0,\quad \forall x\in\mathbb{S}^{n-1}, 
\end{equation} 
we say that invisibility occurs. Physically, it means that the impingement of the incident field $u^i$ generates no scattering information of the scatterer $(\Omega, V)$ in the far-field observation. In such a case, one has from \eqref{eq:i1} by Rellich's theorem that $u^s(x)=0$ for $x\in\mathbb{R}^n\backslash\overline{\Omega}$. Hence, by a similar argument in deriving \eqref{eq:thc1}, one can show that:
\begin{equation}\label{eq:thcn1}
\begin{cases}
\big(\Delta+k^2(1+V)\big) u=0\ &\ \mbox{in}\ \ \Omega,\medskip\\
(\Delta+k^2) u^i=0\ &\ \mbox{in}\ \ \Omega,\medskip\\
u=u^i,\ \ \partial_\nu u=\partial_\nu u^i\ &\ \mbox{on}\ \partial\Omega. 
\end{cases}
\end{equation}
The same result holds for any $u^i$ which is an entire solution to $(\Delta+k^2) u^i=0$ in $\mathbb{R}^n$, not necessarily being the plane wave. Moreover, it can be shown (cf. \cite{BL1,CHLW1}) 
that invisibility occurs for an incident field $u^i$ if and only if $(u|_{\Omega}, u^i|_{\Omega})$ is a pair of transmission eigenfunctions associated with the transmission eigenvalue $k^2$ and the target scatterer $(\Omega, V)$. Hence, for the good sake of invisibility, one should expect the transmission eigenvalues and transmission eigenfunctions of \eqref{eq:thcn1} should be as ``dense" as possible, and this is in sharp difference to that for the LSM which requires the ``sparse" the better; see our discussion in Section~\ref{sect:2.2}. 

There are many delicate but challenging issues regarding the above connection, which are fundamental to invisibility cloaking. We only mention two of them in what follows. First, we let  
$(u|_{\Omega}, u^i|_{\Omega})$ be a pair of transmission eigenfunctions of \eqref{eq:thcn1} associated with $k^2$. In order for perfect invisibility occurs, namely \eqref{eq:i1} holds, the transmission eigenfunction $u^i|_{\Omega}$ should be (analytically) extended to an entire solution satisfying $(\Delta+k^2)u^i=0$ in $\mathbb{R}^n$. However, it is widely believed that this cannot be true in general. Indeed, we have the following conjecture:
\smallskip

\noindent{\bf Conjecture 2.}~\emph{Consider the scattering problem \eqref{eq:as1} with $V\in L^\infty(\Omega)$ and $u^i$ being a nontrivial entire solution to $(\Delta+k^2)u^i=0$ in $\mathbb{R}^n$. Then $u_\infty(\hat x; u^i)\equiv\hspace*{-3mm}\backslash\, 0$ in general.  }

\smallskip

According to our discussion above, Conjecture 2 basically indicates that an inhomogeneous medium $(\Omega, V)$ generically scatters every incident wave nontrivially; namely, perfect invisibility cannot be achieved. It is remarked that there are simple counter examples to Conjecture 2, say e.g. $\Omega$ is a ball and $V$ is spherically symmetric. Nevertheless, Conjecture 2 should hold generically excluding those simple cases. In fact, this fundamental problem has received considerable attentions in the literature recently since the seminal work \cite{BPS}. It is shown in \cite{BPS} that if $\partial\Omega$ has a corner, then the transmission eigenfunction $u^i|_{\Omega}$ cannot be analytically extended across the corner. There are many nice developments following a similar spirit in the literature \cite{B,BLin,BL2,BL4,BXL,CX,LX2} and most of them require certain ``singularities" of $\partial\Omega$ and $V$ near the boundary, except in \cite{BL2} where $\partial\Omega$ can be very smooth or even analytic, but possesses a high-curvature point. It is also remarked that all of the aforementioned results are qualitative except \cite{BL4} where a quantitative stability estimate was established.  On the other hand, we would like to mention in passing that there exist ``abnormal" materials such that Conjecture 2 holds true. Here by ``abnormal", we mean the material parameters (namely $V$ in the current case) are no longer bounded and even anisotropic. We refer to \cite{GKLU1,GKLU2,HL1,L4,LU2} for some specific examples of such invisible ``abnormal" materials as well as surveys on the existing developments on this topic. Hence, Conjecture~2 basically implies that there is no hope in creating a cloaking device by ``normal" materials, namely those materials exist in the nature. 

The other issue is related to the partial-data transmission eigenvalue problem (cf. \eqref{eq:th1p} and \eqref{eq:thu1pp1}), which naturally arises in the study of invisibility cloaking. To illustrate the situation, we 
consider the invisibility cloaking with respect to boundary measurements. Let $(\Omega, V)$ denote the inhomogeneous medium in $\Omega$ and consider the following transmission eigenvalue problem
\begin{equation}\label{eq:th1pn}
\begin{cases}
\big(\Delta+k^2(1+V)\big) u=0\ &\hspace*{-.3cm} \mbox{in}\ \ \Omega,\medskip\\
(\Delta+k^2) u^i=0\ &\hspace*{-.3cm} \mbox{in}\ \ \Omega,\medskip\\
u|_\Gamma=u^i|_{\Gamma},\ \ \partial_\nu u|_{\Gamma'}=\partial_\nu u^i|_{\Gamma'}, 
\end{cases}
\end{equation}
where $\Gamma$ and $\Gamma'$ are two subsets of $\partial\Omega$. Physically,\eqref{eq:th1pn} can be interpreted as follow. By inputting the probing wave $u^i$ on $\Gamma$, one measures the output wave on $\Gamma'$. Due to the transmission conditions in \eqref{eq:th1pn}, one can see that invisibility cloaking effect is achieved with respect to such partial-boundary measurements. Let $\mathscr{T}_{\Gamma, \Gamma'}$ denote the set of transmission eigenvalues to \eqref{eq:th1pn}, and if $\Gamma=\Gamma'$, $\mathscr{T}_{\Gamma}:=\mathscr{T}_{\Gamma, \Gamma}$. It is clear that $\mathscr{T}_{\partial\Omega}$ is the set of the full-data transmission eigenvalues and $\mathscr{T}_{\emptyset}=\mathbb{R}_+$ (here, we only consider the real and nonzero transmission eigenvalues). It is conjectured that:
\smallskip

\noindent{\bf Conjecture 3.}~\emph{Consider the partial-data transmission eigenvalue problem \eqref{eq:th1pn}. Let $\Gamma_j\subset\partial\Omega$ be open subsets and $\Gamma_1\subsetneq\Gamma_2$. Generically, one has $\mathscr{T}_{\Gamma_1}\supsetneq\mathscr{T}_{\Gamma_2}$. Here, the generic conditions should be imposed on $\Gamma_1, \Gamma_2$ and $V$. }

\smallskip

In a similar manner, Conjecture 3 can be formulated for $\mathscr{T}_{\Gamma, \Gamma'}$ with $\Gamma\neq \Gamma'$. Finally, we mention in passing some other related studies and interesting observations on the connections between the invisibility cloaking and transmission eigenvalue problems \cite{JL,LLGW,L1,LWZ}. 

\section{Geometric structures of transmission eigenfunctions}

In this section, we discuss the geometric properties of transmission eigenfunctions. The geometric structures of eigenfunctions have been a central topic in different branches of the spectral theory. To comprehend this point, even for the ``simplest" Dirichlet/Neumann Laplacian eigenfunctions, there are many celebrated conjectures with many ongoing studies including the Courant's nodal domain theorem, the hot-spot conjecture and the Schiffer conjecture, to name just a few; see \cite{CDLZ1, CDLZ2} for a related background discussion on those conjectures. We also refer to \cite{CDLZ2,CDLZ1} for brief reviews as well as some new developments on this important topic. The geometric structures of transmission eigenfunctions were less touched in the literature until recently the author of the present paper and coauthors initiate the study and discover several delicate and intriguing local and global structures of transmission eigenfunctions. In what follows, we discuss those developments as well as some relevant results by other researchers.

\subsection{Local structures of the transmission eigenfunctions}

\cite{BL1} presents the first discovery on the intrinsic geometric structure of transmission eigenfunction. It is proved that the transmission eigenfunctions generically vanish near a corner point, and the result is summarized in the following theorem. 

\begin{Theorem}[\cite{BL1}]\label{thm:local1}
Let $x_c\in\partial\Omega$ be a corner point and let $\varphi\in C^\alpha(\mathbb{R}^n)$ for some $\alpha>0$ (which can be complex-valued) satisfying $\varphi(x_c)\neq 0$. Suppose that there exists a neighbourhood of $x_c$, $B_{\epsilon}(x_c)$, such that $V=\phi$ in $B_\epsilon(x_c)\cap \Omega$. Consider the transmission eigenvalue problem \eqref{eq:th1} with $u, v\in H^1(\Omega)$. Then under a ``generic condition", it holds that
\begin{equation}\label{eq:vanish1}
\lim_{r\rightarrow+0}\frac{1}{|B_r(x_c)\cap \Omega|}\int_{B_r(x_c)\cap \Omega} |v(x)|\, dx=0.
\end{equation}
\end{Theorem}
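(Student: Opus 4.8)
The plan is to decouple the system \eqref{eq:th1} into a single inhomogeneous Helmholtz equation carrying \emph{vanishing} Cauchy data, and then to probe it near the vertex with a one-parameter family of complex-geometric-optics (CGO) solutions concentrating at $x_c$. After translating $x_c$ to the origin, set $w:=u-v$ and write $W:=\Omega\cap B_\epsilon(x_c)$. Subtracting the two equations in \eqref{eq:th1} gives
\begin{equation*}
(\Delta+k^2)w=-k^2Vu\quad\text{in }W,
\end{equation*}
while the transmission conditions $u=v$ and $\partial_\nu u=\partial_\nu v$ on $\partial\Omega$ show that $w$ has vanishing Cauchy data on $\partial\Omega\cap B_\epsilon(x_c)$. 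Because $u$ and $v$ lie in $H^1(W)$ and solve elliptic equations with $C^\alpha$ (resp.\ constant) zeroth-order coefficients, both are Hölder continuous near $x_c$, and $w$ is ``flat'' there in the sense of carrying vanishing Cauchy data on the two pieces of $\partial\Omega$ meeting at the vertex.

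Next I would take harmonic CGO solutions $u_0=u_0^{(s)}(x)=e^{\rho\cdot x}$, where $\rho\in\mathbb{C}^n$ satisfies $\rho\cdot\rho=0$ and $|\rho|\sim s\to+\infty$, with the phase chosen so that $\mathrm{Re}(\rho\cdot x)\le -c\,s\,|x|$ on the tangent sector $\mathcal{K}$ of $\Omega$ at $x_c$. This makes $u_0^{(s)}$ decay exponentially away from the vertex while concentrating at $x_c$ on the scale $1/s$. Applying Green's second identity to $w$ and $u_0$ over $W$, using $\Delta u_0=0$, the equation for $w$, and the vanishing Cauchy data on $\partial\Omega\cap B_\epsilon(x_c)$, all contributions from $\partial\Omega\cap B_\epsilon(x_c)$ drop out and I am left with
\begin{equation*}
k^2\int_{W}V\,u\,u_0\,dx=-k^2\int_{W}w\,u_0\,dx-\int_{\Omega\cap\partial B_\epsilon(x_c)}\bigl(u_0\,\partial_\nu w-w\,\partial_\nu u_0\bigr)\,ds.
\end{equation*}
The arc integral over $\Omega\cap\partial B_\epsilon(x_c)$ is exponentially small in $s$ by construction, and $\int_W w\,u_0$ is subordinate because $w$ vanishes to higher order at the vertex.

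Finally I would extract the leading asymptotics of the surviving term. Expanding $Vu=\varphi(x_c)u(x_c)+R(x)$ with $R(x)=o(1)$ as $x\to x_c$, a direct evaluation of the corner integral $\int_{\mathcal{K}}e^{\rho\cdot x}\,dx$ shows it equals $c_0\,s^{-n}$ with $c_0\ne 0$ determined by the opening angle, while the $R$-term, the $w$-term, and the arc term all decay strictly faster than $s^{-n}$. Multiplying the identity by $s^{n}$ and letting $s\to+\infty$ forces $\varphi(x_c)u(x_c)=0$, hence the corner average of $u$ vanishes since $\varphi(x_c)\ne0$; as $w$ is higher-order small at $x_c$, the averaged quantity for $v$ agrees to leading order with that for $u$, which yields the vanishing \eqref{eq:vanish1}. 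The main obstacle I anticipate is the quantitative bookkeeping of the competing powers of $s$: one must prove that the corner coefficient $c_0$ is nonzero (this is exactly where the ``generic condition,'' e.g.\ the opening angle being $\ne\pi$ so that $\mathcal{K}$ lies in an admissible half-space for $\rho$, is used) and that both $\int_W w\,u_0$ and the arc term are genuinely $o(s^{-n})$, which requires a quantitative vanishing/regularity estimate for $w$ and $u$ at the corner rather than mere $H^1$ membership.
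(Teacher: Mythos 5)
Your proposal does not follow the paper's own proof at all. The paper's argument is \emph{global}: it combines (i) the quantitative corner-scattering lower bound $\|u_\infty(\hat x; u^i)\|_{L^2(\mathbb{S}^{n-1})}\geq \psi(h_{x_c},|u^i(x_c)|,V(x_c))$ from \cite{BL4}, (ii) the Herglotz approximation $v_g\approx v$ of Lemma~\ref{lem:local1}, and (iii) the resulting smallness $\|u_\infty(\hat x; v_g)\|_{L^2(\mathbb{S}^{n-1})}\leq C\varepsilon$, to force $\psi(h_{x_c},|v_g(x_c)|,V(x_c))\rightarrow 0$ and hence \eqref{eq:vanish1}; the ``generic condition'' is the Herglotz-approximation condition \eqref{eq:app5} on the densities $g_j$, not a condition on the corner geometry. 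What you propose is instead the ``local'' argument which the paper attributes to the Addendum of \cite{BL1} and to \cite{B}: your Green-identity relation is exactly \eqref{eq:app7}, and your harmonic exponentials $e^{\rho\cdot x}$, $\rho\cdot\rho=0$, with $\mathrm{Re}(\rho\cdot x)\leq -c\,s\,|x|$ on the tangent cone are the test functions $u_0(sx)$ there. So the skeleton of your argument is legitimate and exists in the literature, but it is a genuinely different route, and it can only prove the theorem under a different hypothesis than the one the paper uses.

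That different hypothesis is precisely where your proposal has a genuine gap. You assert that since $u,v\in H^1$ solve elliptic equations with $C^\alpha$ (resp.\ constant) coefficients, ``both are H\"older continuous near $x_c$.'' That is interior regularity; it fails up to the corner point. Near $x_c$ no boundary condition is prescribed for $u$ or $v$ \emph{individually} on $\partial\Omega$ (only the transmission relations coupling them), so each of $u,v$ may carry a genuine $H^1$ corner singularity: by \eqref{eq:decomp1} one only has $u=u_{reg}+u_{sing}$, $v=v_{reg}+v_{sing}$, and only the regular parts are H\"older by Sobolev embedding. The continuity $u,v\in C^\alpha(\overline{S_h})$ is exactly the assumption \eqref{eq:app8} that the local argument requires, and it is not free: the paper stresses that its validity depends on the a-priori unspecified boundary values of $u,v$ on $\Gamma_h$, and the numerical study \cite{BLLW} exhibits exceptional eigenfunctions whose corner singularity produces localization rather than vanishing, so the property cannot be automatic. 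All of your subsequent bookkeeping rests on it: without H\"older continuity up to $x_c$ you cannot write $w=O(|x-x_c|^\alpha)$ (so $\int_W w\,u_0$ need not be $o(s^{-n})$), you cannot expand $Vu=\varphi(x_c)u(x_c)+R(x)$ with $R=o(1)$, and the values $u(x_c)$, $v(x_c)$ need not even be defined. To close the proof you must either assume \eqref{eq:app8} as your ``generic condition,'' or switch to the paper's global route, where the Herglotz condition \eqref{eq:app5} plays that role instead.
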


We postpone the description of the ``generic condition" in Theorem~\ref{thm:local1} and firstly sketch the main idea in proving the theorem. To that end, we present the following lemma which connects the transmission eigenvalue problem to the scattering problem \eqref{eq:as1}. 

\begin{Lemma}[\cite{BL1}]\label{lem:local1}
Consider the transmission eigenvalue problem \eqref{eq:th1} with $u, v\in H^1(\Omega)$. According to Lemma~\ref{lem:Herg}, we let $v_g$ be a Herglotz wave function such that 
\begin{equation}\label{eq:app1}
\|v_g-v\|_{L^2(\Omega)}\leq \varepsilon\ll 1. 
\end{equation}
Consider the scattering problem \eqref{eq:as1} with $u^i=v_g$. Then it holds that
\begin{equation}\label{eq:app2}
\|u_\infty\|_{L^2(\mathbb{S}^{n-1})}\leq C \varepsilon,
\end{equation}
where $C$ depends on $\Omega, V$ and $k$. 
\end{Lemma}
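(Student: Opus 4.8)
The plan is to exploit the linearity of the scattering system \eqref{eq:as1} together with the well-posedness estimate for the forward problem, converting the $L^2(\Omega)$-closeness of $v_g$ to the transmission eigenfunction $v$ into a quantitative smallness of the far-field pattern. The conceptual point is that the transmission eigenfunction pair $(u,v)$ \emph{exactly} produces zero scattering: if one could use the true eigenfunction $v$ as incident field (even though $v$ is not an entire solution and hence not an admissible incident wave), the resulting scattered field would vanish outside $\Omega$ and the far field would be identically zero. The Herglotz wave $v_g$ is an admissible entire incident field that approximates $v$, so the associated far field should inherit a smallness proportional to $\varepsilon$.

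First I would set up the two scattering problems. Let $u_g$ denote the total field solving \eqref{eq:as1} with incident field $u^i = v_g$, and write $u_g^s = u_g - v_g$ for the scattered field with far field $u_\infty$. The key is a representation formula (Lippmann--Schwinger) expressing $u_\infty(\hat x)$ as an integral over $\Omega$ against $V u_g$, of the schematic form
\begin{equation}\label{eq:prop-ls}
u_\infty(\hat x) = \gamma_n k^2 \int_\Omega e^{-\mathrm{i}k\hat x\cdot y}\, V(y)\, u_g(y)\, dy,
\end{equation}
so that $\|u_\infty\|_{L^2(\mathbb{S}^{n-1})} \leq C\,\|V\|_{L^\infty}\,\|u_g\|_{L^2(\Omega)}$. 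Next I would compare $u_g$ inside $\Omega$ with the transmission eigenfunction $u$. Since $v$ satisfies $(\Delta+k^2)v=0$ and $u$ satisfies $(\Delta+k^2(1+V))u=0$ in $\Omega$ with matching Cauchy data on $\partial\Omega$, the pair $(u,v)$ solves the homogeneous transmission problem, whereas $(u_g, v_g)$ solves the corresponding \emph{scattering} problem. Setting $w = u_g - u$ and using the transmission conditions, $w$ solves an inhomogeneous problem whose data is controlled by $v_g - v$; the elliptic well-posedness estimate for the Helmholtz equation on $\Omega$ (away from Dirichlet eigenvalues, which is where the ``generic condition'' and the dependence of $C$ on $\Omega, V, k$ enter) then yields $\|u_g - u\|_{L^2(\Omega)} \leq C\varepsilon$.

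Combining these, $\|u_g\|_{L^2(\Omega)} \leq \|u\|_{L^2(\Omega)} + C\varepsilon$; but plugging into \eqref{eq:prop-ls} directly would only give a far field of size $\mathcal{O}(1)$, not $\mathcal{O}(\varepsilon)$. The resolution, and the genuinely essential step, is to observe that the far field generated by the \emph{exact} eigenfunction $u$ (extended by its would-be scattered field) vanishes. Concretely I would introduce the ``virtual'' scattered field associated with $u$ via Green's representation and show, using the transmission conditions $u=v$, $\partial_\nu u = \partial_\nu v$ on $\partial\Omega$ together with the fact that $v$ solves the free Helmholtz equation in $\Omega$, that the corresponding far-field contribution is zero. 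Then $u_\infty$ equals the far field generated by the \emph{difference} $u_g - u$ (equivalently by $V(u_g - u)$ plus a boundary term controlled by $v_g - v$), which is $\mathcal{O}(\varepsilon)$ by the previous estimate, giving \eqref{eq:app2}. The main obstacle I anticipate is making this cancellation rigorous: one must carefully track the boundary integrals in Green's formula and justify that the non-admissibility of $v$ as an entire incident field does not obstruct the argument, i.e.\ that replacing $v$ by its Herglotz approximant $v_g$ perturbs every term continuously in the $H^1(\Omega)$-topology guaranteed by Lemma~\ref{lem:Herg}.
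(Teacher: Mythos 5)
Your overall strategy---exploiting linearity and the fact that the transmission eigenfunction pair produces exactly zero scattering---is the right one, and it is essentially the argument behind the lemma in \cite{BL1}. However, there is a genuine gap in your middle step, where you claim $\|u_g-u\|_{L^2(\Omega)}\leq C\varepsilon$ via an ``elliptic well-posedness estimate for the Helmholtz equation on $\Omega$ (away from Dirichlet eigenvalues)''. Inside $\Omega$ both $u_g$ and $u$ solve the \emph{same} homogeneous equation $(\Delta+k^2(1+V))w=0$, so any interior boundary value problem for $w=u_g-u$ must be driven by its boundary data; but $w|_{\partial\Omega}=(v_g-v)|_{\partial\Omega}+u_g^s|_{\partial\Omega}$, and the trace of the scattered field $u_g^s$ on $\partial\Omega$ is not known to be small a priori---its smallness is essentially equivalent to the conclusion \eqref{eq:app2} you are trying to prove. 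The step is therefore circular. Moreover, the caveat about Dirichlet eigenvalues is spurious: the scattering problem \eqref{eq:as1} is well posed for \emph{every} $k\in\mathbb{R}_+$, the lemma has no exceptional frequencies, and the ``generic condition'' of Theorem~\ref{thm:local1} concerns the Herglotz density bounds \eqref{eq:app5}, not this lemma.

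The repair uses only your own step-4 idea, but applied at the level of the full-space problem, which makes the interior comparison (and the Lippmann--Schwinger reduction) unnecessary. Since $u=v$ and $\partial_\nu u=\partial_\nu v$ on $\partial\Omega$, the zero extension $w_0:=(u-v)\chi_\Omega$ belongs to $H^1(\mathbb{R}^n)$ and satisfies, distributionally in all of $\mathbb{R}^n$ (the matching Cauchy data guarantee that no surface terms arise), $(\Delta+k^2(1+V))w_0=-k^2Vv$; it is compactly supported, hence trivially radiating, and its far field vanishes identically. The scattered field $u_g^s$ is the radiating solution of $(\Delta+k^2(1+V))u_g^s=-k^2Vv_g$. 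Consequently $u_g^s-w_0$ is the radiating solution with source $-k^2V(v_g-v)$ supported in $\Omega$, and the a priori estimate for the scattering problem (continuity of the source-to-far-field map, valid for all $k>0$) yields
\begin{equation*}
\|u_\infty\|_{L^2(\mathbb{S}^{n-1})}=\big\|(u_g^s-w_0)_\infty\big\|_{L^2(\mathbb{S}^{n-1})}\leq C\,\|V(v_g-v)\|_{L^2(\Omega)}\leq C\,\|V\|_{L^\infty(\Omega)}\,\varepsilon,
\end{equation*}
which is \eqref{eq:app2}. Your estimate $\|u_g-u\|_{L^2(\Omega)}\leq C\varepsilon$ is in fact true, but it follows from this global difference argument as a consequence, rather than serving as an input obtained from an interior boundary value problem.
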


Now, the proof of Theorem~\ref{thm:local1} can be divided into the following four steps:\smallskip

{\bf Step 1.}~Consider the scattering problem \eqref{eq:as1} associated with the scatterer $(\Omega, V)$ described in Theorem~\ref{thm:local1} and a generic incident wave $u^i$. It is shown in \cite{BPS} that 
\begin{equation}\label{eq:est1}
u_\infty(\hat x; u^i)\equiv\hspace*{-3mm} \backslash\, 0. 
\end{equation}
According to our discussion after Conjecture 2, \eqref{eq:est1} indicates that if a scattering medium $(\Omega, V)$ possesses a corner, it scatters every incident wave nontrivially. 

\smallskip

{\bf Step 2.}~In \cite{BL4}, the qualitative result \eqref{eq:est1} is quantified. It is shown that there exists a stability function $\psi$ (which is real valued and satisfies $\lim_{t\rightarrow+0}\psi(t)=0$) such that
\begin{equation}\label{eq:ss1}
\|u_\infty(\hat x; u^i)\|_{L^2(\mathbb{S}^{n-1})}\geq \psi(h_{x_c}, |u^i(x_c)|, V(x_c)),
\end{equation}
where $h_{x_c}\in\mathbb{R}_+$ signifies the size of the corner.  The quantitative estimate in \eqref{eq:ss1} indicates that if a medium scatterer $(\Omega, V)$ possesses a corner, it not only scatters a generic incident wave nontrivially, but also stably, with the lower bound of the scattering energy depending on the size of the corner, the values of the incident wave and the medium parameter at the corner point. 

\smallskip

{\bf Step 3.}~Choose $u^i=v_g$ satisfying \eqref{eq:app1}. By \eqref{eq:app2} in Lemma~\ref{lem:local1}, one has
\begin{equation}\label{eq:app3}
\|u_\infty(\hat x; v_g)\|_{L^2(\mathbb{S}^{n-1})}\leq C \varepsilon. 
\end{equation}

\smallskip

{\bf Step 4.}~Combining \eqref{eq:ss1} and \eqref{eq:app3}, one has
\begin{equation}\label{eq:app4}
\psi(h_{x_c}, |v_g(x_c)|, V(x_c))\leq \|u_\infty(\hat x; v_g)\|_{L^2(\mathbb{S}^{n-1})}\rightarrow 0\quad \mbox{as}\ \varepsilon\rightarrow 0. 
\end{equation}
Finally, noting that as $\varepsilon\rightarrow 0$, $v_g(x_c)\rightarrow v(x_c)$ (in $L^1$), one can solve from \eqref{eq:app4} that \eqref{eq:vanish1} holds. 

It is clear that the proof of Theorem~\ref{thm:local1} critically depends on the Herglotz approximation. Hence, the ``generic condition" in Theorem~\ref{thm:local1} is actually characterized by the following Herglotz approximation of the transmission eigenfunction $v$: 
\begin{equation}\label{eq:app5}
\|{v-v_{g_j}}\|_{L^2(\Omega)} < e^{-j}, \qquad
    \|{g_j}\|_{L^2(\mathbb S^{n-1})} \leq C (\ln j)^\beta, \ j=1, 2, \ldots,
\end{equation}
where $C$ and $\beta$ are two generic positive constants. By Lemma~\ref{lem:Herg}, the transmission eigenfunction $v$ can always be approximated by a sequence of Herglotz waves $v_{g_j}$, $j=1, 2, \ldots$. However, according to the Addendum of \cite{BL1}, $\|g_j\|_{L^2(\mathbb{S}^{n-1})}$ must be divergent. Hence, \eqref{eq:app5} serves as a kind of regularity condition on the transmission eigenfunction in order for the vanishing property in Theorem~\ref{thm:local1} holds. Indeed, by fixing the convergence rate in the first condition in \eqref{eq:app5}, the more relaxed upper bound on the blowup rate in the second condition in \eqref{eq:app5}, the more relaxed regularity requirement on $v$ in order for the vanishing property in Theorem~\ref{thm:local1} holds. In fact, if there is no upper bound requirement in the second condition in \eqref{eq:app5}, then Theorem~\ref{thm:local1} holds for any transmission eigenfunctions $u, v$ belonging to $H^1(\Omega)$. In \cite{DCL}, the regularity requirement in terms of the Herglotz approximation is further relaxed to be:
\begin{equation}\label{eq:app6}
\|v-v_{g_j}\|_{H^1(\Omega)} \leq j^{-1-\Upsilon },\quad  \|g_j\|_{L^2({\mathbb S}^{1})} \leq C j^{\varrho},\ j=1, 2, \ldots,
\end{equation}
where $\Upsilon$ and $\varrho$ are two generic positive constants. 

The argument discussed above in proving Theorem~4.1 in \cite{BL1} is clearly of a global nature since it relies on Lemma~\ref{lem:local1}. In the Addendum in \cite{BL1} and \cite{B}, a ``local" argument was developed, which works to prove the vanishing property of the following partial-data transmission eigenfunctions:
\begin{equation}\label{eq:th1pn1}
\begin{cases}
\big(\Delta+k^2(1+V)\big) u=0\ &\hspace*{-.3cm} \mbox{in}\ \ \Omega,\medskip\\
(\Delta+k^2) v=0\ &\hspace*{-.3cm} \mbox{in}\ \ \Omega,\medskip\\
u|_{\Gamma_h}=v|_{\Gamma_h},\ \ \partial_\nu u|_{\Gamma_h}=\partial_\nu v|_{\Gamma_h}, 
\end{cases}
\end{equation}
where $V$ is given as that in Theorem~\ref{thm:local1} and $\Gamma_h=\partial S_h\cap\partial\Omega$ with $S_h:=B_h(x_c)\cap\Omega$ being an open neighbourhood of the corner inside $\Omega$. The aforementioned local argument is based on the following integral identity:
\begin{equation}\label{eq:app7}
 k^2\int_{S_h}\big[v-(1+V)u \big](x)\cdot u_0(sx)=\int_{\Lambda_h} \partial_\nu(u-v) u_0-(u-v)\partial_\nu u_0,
\end{equation}
where $\lambda_h:=\partial S_h\backslash\partial\Omega$, and $u_0(s x)$ is a harmonic function with $s$ being a free parameter. The strategy is to take advantage of the 
special form of the test function $u_0(sx)$ as well as its free parameter $s$ to delicately characterize the singularities of $u$ and $v$ in the phase space via the integral identity. In achieving this, it is required that the transmission eigenfunctions $u, v$ are H\"older continuous:
\begin{equation}\label{eq:app8}
u, v\in C^\alpha(\overline{S_h}). 
\end{equation}
Under the regularity condition \eqref{eq:app8}, it is established in the Addendum of \cite{BL1} and \cite{B} that $u(x_c)=v(x_c)=0$. By the PDE regularity theory, it is known that one has the following decomposition (cf. \cite{Dauge88,Grisvard,Cos,CN}):
\begin{equation}\label{eq:decomp1}
u=u_{reg}+u_{sing}, \quad v=v_{reg}+v_{sing},\ \ \mbox{in}\ \ S_h,
\end{equation}
where $u_{sing}$ and $v_{sing}$ account for the $H^1$-singularities of $u, v$, whereas $u_{reg}$ and $v_{reg}$ are at least $H^2$-regular. It is noted that an $H^2$-regular function is H\"older continuous by the standard Sobolev embedding. Hence, the regularity requirement \eqref{eq:app8} is fulfilled or not critically depends on the singular parts in the decomposition \eqref{eq:decomp1}. On the other hand, the singular parts indeed belong to $C^\alpha(\overline{S}_h)$ if $u, v$ are sufficiently regular on $\Gamma_h=\partial S_h\cap\partial\Omega$; see \cite{DCL} for the relevant discussion. However, in the transmission conditions on $\Gamma_h$ in \eqref{eq:th1pn1}, the values of $u|_{\Gamma_h}, v|_{\Gamma_h}$ are not a-priori specified. Hence, the vanishing property of $u/v$ can serve as an indicator of the regularity of $u, v$ on $\Gamma_h$. The regularity point discussed above was first explored in \cite{DCL}. As shown in \cite{DCL}, the regularity requirement in \eqref{eq:app8} is a physical condition since when applying the vanishing property to inverse problems or invisibility problems associated with the physical scattering system \eqref{eq:as1}, such a regularity requirement can always be fulfilled. On the other hand, it is numerically shown in \cite{BLLW} that generically, the transmission eigenfunctions possess the vanishing property near a corner. However, there are also exceptional cases that the singularity of the transmission eigenfunctions produces certain ``localizing phenomena". 

There are some further developments of great importance on the local vanishing properties of the transmission eigenfunctions. In \cite{DCL}, the following partial-data transmission eigenvalue problem was considered:
\begin{equation}\label{eq:th1pn2}
\begin{cases}
\big(\Delta+k^2(1+V)\big) u=0\ &\hspace*{-.3cm} \mbox{in}\ \ \Omega,\medskip\\
(\Delta+k^2) v=0\ &\hspace*{-.3cm} \mbox{in}\ \ \Omega,\medskip\\
u|_{\Gamma_h}=v|_{\Gamma_h},\ \ \partial_\nu u|_{\Gamma_h}=(\partial_\nu v+\eta v)|_{\Gamma_h}, 
\end{cases}
\end{equation}
where $\eta\in L^\infty(\Gamma_h)$. The transmission conditions in \eqref{eq:th1pn2} have a strong physical background in modelling conductive medium bodies; see \cite{DCL} and the references cited therein for related background discussions. In \cite{CX}, the following transmission eigenvalue problem was considered: 
\begin{equation}\label{eq:th1pn2}
\begin{cases}
\nabla\cdot(\sigma\nabla u)+k^2(1+V) u=0\ &\hspace*{-.3cm} \mbox{in}\ \ \Omega,\medskip\\
(\Delta+k^2) v=0\ &\hspace*{-.3cm} \mbox{in}\ \ \Omega,\medskip\\
u|_{\partial\Omega}=v|_{\partial\Omega},\ \ \partial_\nu u|_{\Omega}=\partial_\nu v|_{\Omega}, 
\end{cases}
\end{equation}
and the vanishing properties of either $u/v$ or $\nabla v$ were established under certain conditions. In \cite{BXL, LX2} and \cite{BLin}, the vanishing properties of transmission eigenfunctions were studied in certain scenarios for the transmission eigenvalue problems associated with the Maxwell and Lam\'e systems that arise in electromagnetic and elastic wave scattering, respectively. In all of the aforementioned literature, the studies were concerned with the vanishing properties of the transmission eigenfunctions around a corner point on $\partial\Omega$. In a recent article \cite{BL2}, a more insightful geometric viewpoint was proposed for the vanishing properties of the transmission eigenfunctions. It is shown that if the extrinsic curvature of a boundary point on $\partial\Omega$ is sufficiently large, then the transmission eigenfunctions are generically nearly-vanishing near the high-curvature point. The corner case can be regarded an an extreme case where the curvature is infinite. This geometric viewpoint has been consolidated in our recent study in \cite{Amm0}, where it is shown that the high-curvature part of $\partial\Omega$ can be more stably reconstructed from the far-field measurement. The connection between the near-vanishing of the transmission eigenfunctions around a high-curvature boundary point and the super-resolution reconstruction of the local shape of a medium scatterer $(\Omega, V)$ around its high-curvature boundary point can be roughly described as follows. By the study in \cite{Amm0}, it is known that the scattering information of the high-curvature part of $\partial\Omega$ in the far-field data is more significant than the rest part of the $\partial\Omega$, which is the main reason accounting for the super-resolution local reconstruction mentioned above. On the other hand, by Lemma~\ref{lem:local1}, the transmission eigenfunction $v$ can be regarded as a nearly non-scattering incident wave. Hence, it must be nearly vanishing around a high-curvature point since otherwise, it would generate significant scattering due to the interaction with the high-curvature boundary point, which is a contradiction to the nearly non-scattering fact; see also \eqref{eq:ss1} in the corner case. Such an observation also explains that in many qualitative inverse scattering schemes in imaging the shape of a scatterer, namely $\Omega$, the corner or high-curvature places can be better reconstructed.  

The geometric properties of the transmission eigenfunctions are obviously connected to the invisibility in wave scattering; see our discussion in Section~\ref{sect:3.2} and particularly Conjecture 2. In addition to the aforementioned implications and connections, the local geometric structures of transmission eigenfunctions have been used to derive novel unique identifiability and stability results for Schiffer's inverse shape problem \cite{B,BLin,BL2,BL3,BXL,CDL,DCL}. The Schiffer's inverse shape problem is concerned with recovering the shape of a scatterer independent of its physical content by a single far-field pattern, namely $u_\infty(\hat x; u^i)$ generated by a single incident wave $u^i$. This is rather a huge topic in inverse scattering theory with many existing developments and still full of challenging open problems. We refer to \cite{AL,BL4,CX,CDLZ1,CDLZ2,CS,CY,DLW,DLZZ,EY1,EY2,HNS,LL,LLW2,L2,L3,LPRX,LRX,LT,LX1,LYZ1,LYZ2,LZZ1,LZZ2,LZou1,LZou2,LZou3,LZou4,R1,R2,YYL} and the references cited therein for background discussions and related results in different physical scenarios. It is particularly interesting to point out two specific scenarios. First, in \cite{CLL}, the Schiffer inverse shape problem is completely resolved for the inverse problem associated with the fractional Helmhotlz equation. However, the corresponding argument cannot be extended to the non-fractional case. Second, in \cite{BL3,CDL}, it is proved that not only the shape of a scatterer but also its medium content can be uniquely determined by a single far-field pattern, provided the shape geometry and medium parameter satisfy certain a-priori conditions.

\subsection{Global structure of the transmission eigenfunctions}

The geometric structures of transmission eigenfunctions discussed in the previous subsection are of a local nature. In two recent papers \cite{CHLW1,CHLW2}, a certain global geometric structure was unveiled for the first time. Since one of the research papers for those results is still in progress, we only briefly mention them in what follows, which serves more as an announcement.  

Consider a function $w\in L^2(\Omega)$. It is said to be surface-localized if the following condition is fulfilled for a sufficiently small $\epsilon\in\mathbb{R}_+$: 
\begin{equation}\label{eq:localized1}
\frac{\|w\|_{L^2(\mathcal{N}_\epsilon(\partial\Omega))}}{\|w\|_{L^2(\Omega)}}=1+\mathcal{O}(\epsilon), 
\end{equation}
where 
\begin{equation}\label{eq:localized2}
\mathcal{N}_\epsilon(\partial\Omega):=\{x\in\Omega;\ \mathrm{dist}(x, \partial\Omega)<\epsilon\}. 
\end{equation}
By \eqref{eq:localized1} and \eqref{eq:localized2}, it is easily seen that a surface-localized function with its energy localized on the surface $\partial\Omega$. 

Consider the transmission eigenvalue problem \eqref{eq:th1}. Let $0<k_1^2\leq k_2^2\leq \cdots\leq k_j^2\rightarrow +\infty$ denote the real transmission eigenvalues, and $(u_{k_j}, v_{k_j})$ be the corresponding pair of transmission eigenfunctions associated with the eigenvalue $k_j^2$. It is shown in \cite{CHLW1,CHLW2} that there exists a subsequence $(k_{j_n})\subset (k_j)$ with $k_{j_n}^2\rightarrow +\infty$ as $n\rightarrow +\infty$ such that either $(u_{j_n})$ is a sequence of surface-localized eigenstates or $(v_{j_n})$ is a sequence of surface-localized eigenstates. The existence of surface-localized $(u_{j_n})$ or $(v_{j_n})$ depends on $V$. The discovery not only unveils some intriguing spectral phenomenon that was not known before, but also provides a new perspective on wave localization, which is one of the central topics in wave propagation; see \cite{CHLW1} for more related background discussions. Two interesting applications were also proposed by making use of the surface-localized transmission eigenstates in \cite{CHLW1} including producing a super-resolution wave imaging scheme and generating the so-called pseudo plasmon modes with a potential application to the sensing technology. 

\section{Some topics for future study}

The study of the geometric structures of transmission eigenfunctions is still in its early stage with many intriguing and challenging problems open for further investigation. 
The corresponding study is not only mathematically interesting but also physically important. In addition to the conjectures discussed earlier, we propose several more topics from our perspective that are interesting for further investigation. 

\begin{enumerate}

\item Let us first recall the celebrated Courant's nodal domain theorem for the Dirichlet Laplacian eigenfunction, namely $u\in H_0^1(\Omega)$ such that $-\Delta u=\lambda u$ for $\lambda\in\mathbb{R}_+$. The nodal set for an eigenfunction $u$ is defined as the set of points $x$ such that $u(x)=0$. Courant's nodal domain theorem states that the first Dirichlet eigenfunction doest not change sign in $\Omega$ and the $n$-th eigenfunction (counting multiplicity) $u_n$ has at most $n$ nodal domains. It would be interesting to investigate similar nodal domain properties for the transmission eigenfunction. However, due to the vanishing and surface-localizing properties of the transmission eigenfunctions, it would be more appropriate to investigate the local version of the nodal domain property. That is, for the transmission eigenfunction $v$ in \eqref{eq:th1} associated with $k^2\in\mathbb{R}_+$, if we let $B\Subset\Omega$ be a ball, then the number of nodal domains of $v$ that intersect $B$ should be bounded by a quantity that depends on $k, B, \Omega$ and $V$ (here, one might need to impose a certain regularity condition on $V$ in terms of the smoothness and variations).   

\item According to our earlier discussion, the local vanishing property of transmission eigenfunctions has been mainly established that is related to the extrinsic curvature of a boundary point on $\partial\Omega$. It would be interesting to consider the transmission eigenvalue problem \eqref{eq:thg1}, and study the local and global geometric structures of the corresponding transmission eigenfunctions. It can be expected that the geometric structures are critically related to the intrinsic curvatures of the Riemannian metrics. 

\item It would be interesting to investigate the geometric structures of transmission eigenfunctions associated with more general PDE systems (cf. \eqref{eq:tep1} and \eqref{eq:thp1}) as well as their mathematical and physical applications. 

\end{enumerate}

\section*{Acknowledgment}
The work was supported by a startup grant from City University of Hong Kong and Hong Kong RGC General Research Funds (projects 12301218, 12302919 and 12301420).

\end{document}